\documentclass[a4paper]{article}

\usepackage[]{geometry}

\usepackage[T1]{fontenc}
\usepackage[french,english]{babel}

\usepackage{amsmath,mathtools}
\usepackage{amsthm,thmtools}
\usepackage{etoolbox}

\usepackage{libertine}
\usepackage[libertine]{newtxmath}
\usepackage[mono,narrow,scale=.95]{zi4}
\usepackage[frak=euler,scr=euler,bb=libus]{mathalpha}

\usepackage{microtype}
\usepackage{indentfirst}
\usepackage[all]{nowidow}

\usepackage[shortlabels]{enumitem}
\usepackage{booktabs}
\usepackage{csquotes}
\usepackage{graphicx}
\usepackage{tikz}
\usepackage{tikz-cd}
\usepackage[all]{xy}
\usepackage{pdfpages}
\usepackage{appendix}
\usepackage{varwidth}

\usepackage[makeindex]{imakeidx}
\makeindex[options=-s index.ist]

\usepackage[unicode]{hyperref}
\hypersetup{
    colorlinks=true,
    urlcolor=blue,
    linkcolor=blue,
    citecolor=teal,
    breaklinks=true,
    hypertexnames=false
}
\usepackage[nameinlink]{cleveref}

\usepackage[backend=biber]{biblatex}
\AtEveryBibitem{\clearfield{issn}}

\usepackage{authblk}
\makeatletter
\def\authprecit#1{
  \@temptokena=\expandafter{\AB@authlist}\xdef\AB@authlist{\the\@temptokena
  \noexpand\\{\noexpand\normalsize\noexpand\itshape#1}
  \noexpand\\\noexpand\hspace{-2em}}}
\makeatother

\usepackage{fancyhdr}
\makeatletter
\fancypagestyle{plain}{
  \fancyhf{}
  
}
\fancypagestyle{courant}{
  \fancyhf{}
  
  \fancyhead[L]{\small\em\@title}
  \fancyhead[R]{\small\thepage}
}
\makeatother
\makeatletter
\long\def\nonumfootnote#1{%
  \begingroup
  \renewcommand\thefootnote{}%
  \renewcommand\@makefntext[1]{\noindent##1}%
  \footnotetext{#1}%
  \endgroup}
\makeatother

\newcommand{\subjclass}[2][2020]{%
  \nonumfootnote{\textit{#1 Mathematics Subject Classification.}~#2.}}

\newcommand{\keywords}[1]{%
  \nonumfootnote{\textit{Keywords and phrases:}~#1.}}

\newcommand{\support}[1]{\nonumfootnote{#1}}

\AtEndEnvironment{verbatim}{\pagebreak[3]}

\makeatletter
\g@addto@macro\bfseries{\boldmath}
\g@addto@macro\mdseries{\unboldmath}
\g@addto@macro\normalfont{\unboldmath}
\makeatother

\usepackage{pdfrender}
\def\pdfbold#1{%
  \textpdfrender{TextRenderingMode=FillStroke,%
  LineJoinStyle=Round,LineWidth=\baselineskip/90}{#1}}

\makeatletter
\def\string@b{b}
\let\oldmathbb\mathbb
\def\mathbfbb#1{\pdfbold{\boldsymbol{\oldmathbb{#1}}}}
\def\mathbb#1{{\texorpdfstring{\ifx\f@series\string@b\mathbfbb{#1}\else\oldmathbb{#1}\fi}{#1}}}
\makeatother

\let\ge\geqslant
\let\le\leqslant
\let\geq\geqslant
\let\leq\leqslant

\let\epsilon\varepsilon

\makeatletter
\def\thmhead#1#2#3{%
  \thmname{#1}\thmnumber{\@ifnotempty{#1}{ }\@upn{#2}}%
  \thmnote{ {\normalfont(#3)}}}
\makeatother

\declaretheorem[name=Theorem,numberwithin=section]{thm}
\declaretheorem[name=Definition,sibling=thm]{defi}
\declaretheorem[name=Proposition,sibling=thm]{prop}
\declaretheorem[name=Lemma,sibling=thm]{lem}
\declaretheorem[name=Corollary,sibling=thm]{cor}
\declaretheorem[name=Algorithm,sibling=thm]{alg}

\declaretheorem[name=Remark,sibling=thm,style=remark]{rem}

\declaretheorem[name=Theorem,style=plain,unnumbered]{thm*}
\declaretheorem[name=Definition,style=plain,unnumbered]{defi*}
\declaretheorem[name=Proposition,style=plain,unnumbered]{prop*}
\declaretheorem[name=Lemma,style=plain,unnumbered]{lem*}
\declaretheorem[name=Corollary,style=plain,unnumbered]{cor*}
\declaretheorem[name=Algorithm,style=plain,unnumbered]{alg*}
\declaretheorem[name=Claim,style=plain,unnumbered]{claim*}
\declaretheorem[name=Conjecture,style=plain,unnumbered]{conjecture*}
\declaretheorem[name=Hypothesis,style=plain,unnumbered]{hypothesis*}
\declaretheorem[name=Problem,style=plain,unnumbered]{problem*}
\declaretheorem[name=Question,style=plain,unnumbered]{question*}
\declaretheorem[name=Remark,style=remark,unnumbered]{rem*}
\declaretheorem[name=Example,style=remark,unnumbered]{ex*}
\declaretheorem[name=Notation,style=remark,unnumbered]{notation*}

\crefname{thm}{theorem}{theorems}
\Crefname{thm}{Theorem}{Theorems}
\crefname{defi}{definition}{definitions}
\Crefname{defi}{Definition}{Definitions}
\crefname{prop}{proposition}{propositions}
\Crefname{prop}{Proposition}{Propositions}
\crefname{lem}{lemma}{lemmas}
\Crefname{lem}{Lemma}{Lemmas}
\crefname{cor}{corollary}{corollaries}
\Crefname{cor}{Corollary}{Corollaries}
\crefname{alg}{algorithm}{algorithms}
\Crefname{alg}{Algorithm}{Algorithms}
\crefname{claim}{claim}{claims}
\Crefname{claim}{Claim}{Claims}
\crefname{conjecture}{conjecture}{conjectures}
\Crefname{conjecture}{Conjecture}{Conjectures}
\crefname{hypothesis}{hypothesis}{hypotheses}
\Crefname{hypothesis}{Hypothesis}{Hypotheses}
\crefname{problem}{problem}{problems}
\Crefname{problem}{Problem}{Problems}
\crefname{question}{question}{questions}
\Crefname{question}{Question}{Questions}
\crefname{rem}{remark}{remarks}
\Crefname{rem}{Remark}{Remarks}
\crefname{ex}{example}{examples}
\Crefname{ex}{Example}{Examples}
\crefname{notation}{notation}{notations}
\Crefname{notation}{Notation}{Notations}

\DeclareMathOperator{\Trace}{Tr}

\DeclareMathOperator{\Gal}{Gal}

\DeclareMathOperator{\GL}{GL}
\DeclareMathOperator{\End}{End}
\DeclareMathOperator{\Ker}{Ker}

\DeclareMathOperator{\Aut}{Aut}

\DeclareMathOperator{\Spec}{Spec}

\DeclareMathOperator{\Char}{char}

\newcommand{\F}{\mathbb{F}}

\newcommand{\Q}{\mathbb{Q}}
\newcommand{\Z}{\mathbb{Z}}

\newcommand{\p}{\mathfrak{p}}
\newcommand{\h}{\mathfrak{h}}
\newcommand{\OO}{\mathcal{O}}

\def\fonction#1#2#3#4{
\left\{
\begin{array}{@{}r@{~}c@{~}l@{}}
#1 & \longrightarrow & #2 \\
#3 & \longmapsto & #4
\end{array}
\right.
}

\title{Automorphisms of finite fields from isogeny cycles}
\author{Kéva Djambaé}
\affil{Laboratoire GAATI, Université de Polynésie Française}
\affil{Institut de Mathématiques de Marseille (I2M), UMR 7373 CNRS, Aix-Marseille Université}

\date{}

\begin{document}
\maketitle

\begin{abstract}
We develop an explicit geometric construction of automorphisms of finite fields arising from isogeny cycles. 
Let $k$ be a finite field, $E/k$ an elliptic curve, and $\ell$ an integer coprime to $\Char(k)$. 
Let $\p \subseteq \End(E)$ be an ideal dividing $(\ell)$, and consider the corresponding torsion subgroup $E[\p] \subseteq E[\ell]$. 
From the action of $\End(E)$ on $E[\p]$, we construct the splitting field $K$ of the $x$-coordinates of points in $E[\p]$ and the associated Galois group $\Gal(K/k)$. This yields  $(\End(E)/\p)^\ast \to \Gal(K/k)$ a group homomorphism.
\end{abstract}

\support{The author is supported by a PhD grant from the University of French Polynesia and by the ANR MELODIA Project under grant number ANR-20-CE40-0013.}

\keywords{Finite fields, local fields, elliptic curves, isogenies, endomorphism rings, torsion points, Galois automorphisms, local class field theory}

\subjclass[2020]{11G20, 14G15, 12F10, 11T06}


\section{Introduction}
Let $K/k$ be an extension of finite fields, denote by $q$ the cardinality of the field $k$. The Galois group $\Gal(K/k)$ is  cyclic of order $[K:k]$, generated by the Frobenius automorphism $\pi_q$, and such extensions play a crucial role in cryptography, notably in pairing-based protocols \cite{Boneh}, which remain an active area of research today \cite{Aranha}.

\smallskip

In this work, we investigate how isogenies between elliptic curves can yield explicit constructions of field automorphisms. Our approach builds on Kohel’s study of endomorphism rings \cite{Kohel1996} and connects to recent work by Bassa, Bisson, and Oyono on iterative constructions of irreducible polynomials via isogenies \cite{Bassa2024}, as well as to \cite{BissonTibouchi2018}, which uses isogenies of elliptic curves to construct permutation rational functions over finite fields.

\smallskip

Let $k$ be a finite field and $E/k$ an elliptic curve. We exploit the scheme-theoretic structure of the $\ell$-torsion subgroup $E[\ell]$ to construct explicit Galois automorphisms of the field extensions generated by the $x$-coordinates of the $\ell$-torsion points. Following \cite{cryptoeprint:2025/543}, we restrict the action of $\End(E)$ on the Kummer line $\mathbb{P}^1(K)$ to the subgroup $E[\p](K)$, for a prime ideal $\p$ dividing $(\ell)$, and consider a Galois orbit $X \subset E[\p](K)$ on the Kummer line. This procedure allows us to generate elements of the Galois group independently of the Frobenius, with no knowledge of the associated discrete logarithm, that is, without a priori knowing the exponent $i$ such that $\sigma_f = \pi_q^i$ in the cyclic group $\Gal(K/k) = \langle \pi_q \rangle$.
\smallskip

This article is organized as follows. In Section \ref{sec:prelim}, we review the necessary background, recalling the key ideas
and structural arguments underlying construction. Section \ref{sec:construction}  develops the  construction in case where $\ell$ is a prime different from $p$, the characteristic of $k$. Section \ref{General case} generalizes the construction to all positive integers $\ell$ and proves that suitable endomorphisms yield explicit Galois automorphisms. Section \ref{sec:algo} begins by proposing to construct an endomorphism $f$ of an elliptic curve $E$ using isogeny graphs, then discusses the algorithm and aspects related to the complexity of the construction. Section \ref{sec:applications}  illustrates the method through detailed examples
in both the ordinary and supersingular settings.

\smallskip

The primary goal of this paper is to establish the following result, which relates to the explicit construction of Galois
automorphisms :

\begin{thm}[Main result]\label{thm::final}
Let $k = \F_{q} $ be a finite field of characteristic $p$ and let $\ell$ be a positive integer coprime to $p$.
Let $E/k$ be an elliptic curve, and let $\p$ be an ideal  of $\End(E)$ of  norm $\ell$.
Denote by $H$  the subscheme of $E[\ell]$ defined by
$$
H=E[\p]=\bigcap_{\phi\in\p} \Ker(\phi)\subseteq E[\ell] .
$$
Denote by $\psi$ an irreducible factor of the kernel polynomial $\psi_H$ of $H$ and by $n$ the degree of $\psi$ . Let $K$ be the splitting field of $\psi$ over $k$, and suppose that the roots $(x_i)_{1 \le i \le n}$ of $\psi$ form a basis of $K/k$.
If $f \in \End(E)$ is a non trivial endomorphism defined over $k$ whose degree is coprime to $\ell$, then $f$ induces an automorphism
$$
\sigma_f \in \Gal(K/k), \quad \text{defined by} \quad \sigma_f\left( \sum_{i=1}^{n} \alpha_i x_i \right) = \sum_{i=1}^{n} \alpha_i \bar{f}(x_i),
$$
for any $\alpha_i \in K$, where $\bar{f}$ is the induced endomorphism of the Kummer curve $\mathbb{P}^1(k) = E/\{\pm1\}$.
\end{thm}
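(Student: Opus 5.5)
We will show that $\bar f$ permutes the roots $x_1,\dots,x_n$ of $\psi$ exactly as some power of the Frobenius does. The formula in the statement then defines a $k$-linear map on $K$ which agrees with a genuine Galois automorphism on a basis. Here $\alpha_i$ should be read in $k$ so that linearity makes sense; for $\alpha_i \in K$ the formula only makes sense as a statement about the action on the basis. The proof has four steps, taken in order.

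\textbf{Step 1: $f$ preserves $H$ and permutes it.} Since $\End(E)$ is commutative in the relevant cases (or, more generally, since $\p$ is stable under $f$), for $P \in H$ and $\phi \in \p$ we have $\phi(f(P)) = f(\phi(P))$ up to the ideal structure. Hence $f(H) \subseteq H$. Because $\deg f$ is coprime to $\ell$ and $H \subseteq E[\ell]$, the kernel of $f$ meets $H$ trivially. So $f$ restricts to an automorphism of the finite group $H(\bar k)$, commuting with $[-1]$. It therefore induces a bijection $\bar f$ on the set of $x$-coordinates of the nonzero points of $H$, which are the roots of $\psi_H$.

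\textbf{Step 2: $\bar f$ preserves the root set of $\psi$.} Since $f$ is defined over $k$, $\bar f$ is a rational function with coefficients in $k$, so it commutes with $\pi_q$. The roots of the irreducible factor $\psi$ form a single Frobenius orbit $\{x, x^q, \dots, x^{q^{n-1}}\}$. Then $\bar f(x^{q^j}) = \bar f(x)^{q^j}$, and $\bar f(x)$ is a root of some irreducible factor $\psi'$ of $\psi_H$ of the same degree. The point needing care is that $\psi' = \psi$. Here I would use that $\End(E)$ acts on $H \cong \End(E)/\p$ through scalars in $(\End(E)/\p)^\ast$, as does $\pi_q$. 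So $f$ acts on $H$ as multiplication by some unit $u$, and $\pi_q$ acts as a unit $v$. A Frobenius orbit on $H/\{\pm1\}$ is then a coset of $\langle \pm v \rangle$. This is the main obstacle: the claim $f(P) = \pm \pi_q^i(P)$ requires $u \in \langle \pm v\rangle$, i.e.\ that $f$ lands in the same orbit. I would justify it using that $\psi$ was chosen among the factors and that $\bar f$ is a $k$-rational map sending $k$-irreducible orbits to orbits of equal size. In the cyclic case $H \cong \Z/\ell$, for prime $\ell$, the orbits of $(\Z/\ell)^\ast/\langle\pm v\rangle$ are cosets, so $\bar f$ permutes orbits compatibly. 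I would refine the statement to the orbit $X$ mentioned in the introduction if needed.

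\textbf{Step 3: identify $\sigma_f$ with a power of Frobenius.} Once $u = \pm v^i$ on $H$, we get $\bar f(x_j) = x_j^{q^i}$ for all roots $x_j$ simultaneously, since the scalar action commutes with everything.

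\textbf{Step 4: conclude.} Define $\sigma_f := \pi_q^i$. It is $k$-linear, and on the basis $(x_j)$ it equals $\bar f$. Hence $\sigma_f(\sum \alpha_i x_i) = \sum \alpha_i \bar f(x_i)$ for $\alpha_i \in k$, which proves $\sigma_f \in \Gal(K/k)$. Nontriviality of $f$ is not needed beyond ensuring $u$ is defined. The basis hypothesis makes the formula determine $\sigma_f$ uniquely, and it also makes the map $f \mapsto \sigma_f$ a homomorphism from $(\End(E)/\p)^\ast$.
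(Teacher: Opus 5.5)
Your Step~2 is a genuine gap. You located it correctly, but the justification you sketch does not close it. That $\bar f$ is $k$-rational and commutes with $\pi_q$ only shows that $\bar f$ maps the $\pi_q$-orbit $\{x_1,\dots,x_n\}$ bijectively onto \emph{some} $\pi_q$-orbit of the same length, i.e.\ onto the root set of some degree-$n$ factor $\psi'$ of $\psi_H$. In your notation, multiplication by $u$ permutes the cosets of $\langle\pm v\rangle$ in $(\Z/\ell\Z)^*$, and it fixes the coset of $1$ only when $u\in\langle\pm v\rangle$. Nothing in the hypotheses forces this, and in fact the claim is false. Take $\ell$ prime with $\psi_H$ reducible, pick an integer $m$ prime to $\ell$ with $m\not\equiv\pm v^i\pmod{\ell}$ for all $i$, and let $f=[m]$. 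This $f$ is defined over $k$, nonzero, of degree $m^2$ prime to $\ell$, and preserves $H$. Yet $\bar f(x_1)$ is a root of $\psi'\neq\psi$, whereas every element of $\Gal(K/k)$ sends $x_1$ to a root of its minimal polynomial $\psi$. So the $k$-linear map defined by the formula is not a field automorphism. The most degenerate case is when $\pi_q$ fixes $H$ pointwise (e.g.\ $\p$ a degree-one prime containing $\pi_q-1$). Then $n=1$, $K=k$, and $\{x(P)\}$ is a basis when $x(P)\neq0$; for $f=[2]$ and $\ell\geq5$, the formula is multiplication by $x(2P)/x(P)\neq1$, which is not the identity.

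So what is missing is a hypothesis, not an argument. You need to add that $f$ acts on $H$ through $\langle\pm v\rangle$. This is automatic when $\ell$ is prime and $\psi=\psi_H$ is irreducible, since then $\langle\pm v\rangle=(\Z/\ell\Z)^*$; that is the situation of the paper's examples, Corollary~\ref{isomorphisme}(2) and Algorithm~\ref{alg:sigmaf}. Under this hypothesis, your Steps~1, 3 and 4 give a complete proof that identifies $\sigma_f=\pi_q^i$ directly, which is more transparent than the paper's. In Step~1, replace ``up to the ideal structure'' by the actual requirement $\phi\circ f\in\p$ for all $\phi\in\p$, as in Lemma~\ref{stabi}; it holds for $f=[m]$ or for $\p$ two-sided.

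Be aware that the paper's proof does not fill this hole either. It lets $f^*$ act on $R_H\simeq\prod_Z k(Z)$ by permuting idempotents and then ``restricts to $K$''. But $f^*$ carries the factor attached to the orbit $f(Z)$ onto the factor attached to $Z$. It therefore restricts to an automorphism of the factor $K=k[X]/(\psi)$ only if $f$ maps the Galois orbit of $x_1$ to itself, which is exactly your unproved Step~2. Rather than ``refining to the orbit $X$ if needed'', state this condition explicitly as a hypothesis.
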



\section{Background}\label{sec:prelim}
Let $k=\F_q$ be a finite field. Denote by $\overline{k}$  its algebraic closure. Its Galois group $\Gal(K/k)$ is cyclic and generated by the Frobenius endomorphism $\pi_q$.

\subsection{Elliptic curves and torsion points}
Let $E/k$ be an elliptic curve. Recall that $E$ is a smooth group scheme of finite type over $k$.
For any integer $n\ge 1$ with $\gcd(n,\mathrm{char}(k))=1$, the subgroup scheme $E[n]$ is finite étale of rank $n^2$.
For a prime $\ell\neq \Char(k)$, we denote by $\psi_\ell$ the $\ell$-division polynomial, whose roots are the
$x$-coordinates of non-trivial $\ell$-torsion points of $E$.

\begin{prop}
\label{prop:splitting_field}
Let $\psi$ be an irreducible factor of the division polynomial $\psi_\ell$ over $k$ and put $ K = k[X]/(\psi(X)). $ Then $K/k$ is
a finite separable extension generated over $k$ by the  $x$-coordinate of a non-trivial $\ell$-torsion point.
\end{prop}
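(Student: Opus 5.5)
The statement to prove is \Cref{prop:splitting_field}. There are three claims: $K$ is a field, $K/k$ is separable, and $K$ is generated by the $x$-coordinate of a non-trivial $\ell$-torsion point. I will take them in that order, relying only on the standard facts recalled above: $E[\ell]$ is finite étale of rank $\ell^2$ for $\ell\neq\Char(k)$, and the roots of $\psi_\ell$ are exactly the $x$-coordinates of the non-trivial points of $E[\ell](\overline{k})$.

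\emph{Step 1: $K$ is a field.} Since $\psi$ is irreducible over $k$ and $k[X]$ is a PID, the ideal $(\psi)$ is maximal. Hence $K=k[X]/(\psi)$ is a finite field extension of $k$ of degree $\deg\psi$.

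\emph{Step 2: the generator.} Let $\bar X$ be the class of $X$ in $K$. Fix an embedding $K\hookrightarrow\overline{k}$; it sends $\bar X$ to a root $x_0$ of $\psi$, and hence of $\psi_\ell$. By the defining property of the division polynomial, there is a point $P=(x_0,y_0)\in E(\overline{k})$ with $P\neq O$ and $\ell P=O$. So $K\cong k(x_0)$ is generated over $k$ by the $x$-coordinate of a non-trivial $\ell$-torsion point. Note that $y_0$ may lie only in a quadratic extension of $K$, which is why the statement mentions only the $x$-coordinate.

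\emph{Step 3: separability.} This is the one step needing an actual argument. Since $k$ is finite, hence perfect, every irreducible polynomial over $k$ is separable, so $K/k$ is separable. That already suffices.

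I would also add a remark avoiding perfectness, for use in the later general setting. Because $\gcd(\ell,\Char(k))=1$, the group scheme $E[\ell]$ is étale, so multiplication by $\ell$ is separable and $E[\ell](\overline{k})$ has exactly $\ell^2$ points. The map $P\mapsto x(P)$ identifies pairs $\{\pm P\}$ of non-trivial points with roots of $\psi_\ell$. One then checks that $\psi_\ell$, suitably normalized as the product of $(X-x(P))$ over representatives of $\{\pm P\}$, with a square-root adjustment at $2$-torsion when $\ell$ is even, has distinct roots. Therefore its factor $\psi$ is separable.

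The main obstacle is only this normalization bookkeeping in the remark, notably the convention at points with $2P=O$. The perfectness argument makes it unnecessary for the proposition itself.
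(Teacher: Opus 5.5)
Your proof is correct. Steps 1 and 2 match the second half of the paper's argument: the roots of $\psi_\ell$ are the $x$-coordinates of non-trivial $\ell$-torsion points, so $K \cong k(x_0)$. The genuine difference is separability. The paper deduces it from $E[\ell]$ being finite étale (since $\gcd(\ell,\Char(k))=1$): it asserts that $\psi_\ell$ therefore has simple roots, so every irreducible factor is separable. You instead use only that $k=\F_q$ is perfect. Your route is shorter and fully rigorous in this setting. It also sidesteps the bookkeeping that the paper's one-line inference relies on implicitly: to pass from ``$E[\ell](\overline{k})$ has $\ell^2$ distinct points'' to ``$\psi_\ell$ is squarefree'', one needs the degree count $\deg\psi_\ell=(\ell^2-1)/2$ (with leading coefficient $\ell\neq 0$ in $k$) for odd $\ell$, and separate handling of the $\psi_2$ factor when $\ell$ is even. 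That is exactly what your closing remark sketches. In exchange, the paper's étale argument never uses perfectness, so it works over any base field of characteristic prime to $\ell$. It also gives a stronger conclusion: $\psi_\ell$ itself is squarefree, not just that each irreducible factor is separable. Your perfectness argument does not rule out repeated irreducible factors.
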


\begin{proof}
Since $\gcd(\ell,\mathrm{char}(k))=1$, the subgroup scheme $E[\ell]$ is finite étale over $k$, so $\psi_\ell$ has simple roots and every irreducible factor $\psi$ is separable, giving that $K/k$ is a finite separable extension. 

By \cite[Chapter~III, Exercise~3.7 (a),(f)]{Silverman1986}, the roots of $\psi_\ell$ are exactly the $x$-coordinates of the non-trivial $\ell$-torsion points of $E$, so $K=k[X]/(\psi(X))$ is generated over $k$ by the $x$-coordinate of such a point.
\end{proof}

The structure of the endomorphism ring $\End(E)$ of an elliptic curve $E$ depends on whether $E$ is ordinary or supersingular, as follows:

\def\stuff{\cite[Chapter~V, Theorem~3.1]{Silverman1986}}
\begin{prop}[\stuff]
Suppose $k$ is a field of finite characteristic, if $E$ is ordinary, then $\End(E)$ is an order in an imaginary quadratic field. Otherwise, $E$ is supersingular and $\End(E)$ is a maximal order in a quaternion algebra.
\end{prop}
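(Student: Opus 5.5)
The statement that immediately precedes this point is the proposition quoting Silverman (Chapter~V, Theorem~3.1): in positive characteristic, $\End(E)$ is an order in an imaginary quadratic field when $E$ is ordinary, and a maximal order in a quaternion algebra when $E$ is supersingular. This is a classical result. Since the paper cites it as background, my plan is to outline the standard argument rather than give a new one.

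\emph{Step 1: possible shapes of $\End(E)$.} I would first show that $\End(E)$ is a torsion-free $\Z$-module of rank at most $4$, with a positive definite degree form. Rank at most $4$ comes from the injectivity of $\End(E)\otimes\Z_\ell\to\End(T_\ell E)\cong M_2(\Z_\ell)$ for $\ell\ne p$. The dual isogeny gives a positive involution. The classification of such rings then leaves three possibilities: $\Z$, an order in an imaginary quadratic field, or an order in a definite quaternion algebra over $\Q$.

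\emph{Step 2: ruling out $\Z$ over finite fields.} Over a finite field the Frobenius $\pi_q$ is an endomorphism. It is not in $\Z$, because it is inseparable of degree $q$, which is not a square when $q$ is an odd power of $p$; otherwise one passes to a suitable extension or uses the trace relation $\pi_q^2-t\pi_q+q=0$ with $t^2<4q$ except in degenerate cases. Hence $\End(E)$ has rank at least $2$. Strictly, the statement concerns $\End_{\bar k}(E)$, as in Silverman.

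\emph{Step 3: matching the dichotomy with ordinary versus supersingular.} If $\End(E)$ is quaternionic, then all curves isogenous to $E$ are defined over a common finite field. Via the characterization that $E$ is supersingular if and only if $E[p^r]=0$, equivalently $\hat\pi$ is inseparable, one gets that $E$ is supersingular. Conversely, if $E$ is supersingular, then $E$ is defined over $\F_{p^2}$ up to isomorphism and some power of Frobenius equals an integer multiple $\pm p^m$. The $\ell$-adic argument then shows that $\End(E)\otimes\Z_\ell\cong M_2(\Z_\ell)$, which forces rank $4$.

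\emph{Step 4: maximality.} For maximality in the supersingular case, the order is maximal at every $\ell\ne p$ by Tate's isomorphism $\End(E)\otimes\Z_\ell\cong\End_{\mathrm{Gal}}(T_\ell E)$. At $p$ it is maximal by the analysis of the formal group (Deuring). I expect the $p$-adic maximality to be the main obstacle. As the paper does, I would simply invoke Deuring's theorem or Silverman's reference for that step.
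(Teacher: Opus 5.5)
The genuine gap is in the first half of Step~3, the implication ``$\End(E)$ quaternionic $\Rightarrow$ $E$ supersingular''. You need this implication. Step~2 only excludes $\End(E)=\Z$, so it is exactly this implication that rules out a quaternion order when $E$ is ordinary.

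What you write does not prove it. You give no reason why a quaternionic $\End(E)$ should force all isogenous curves to be defined over a common finite field. In the standard proof, that finiteness is a \emph{consequence} of supersingularity: every curve isogenous to a supersingular one is supersingular and so has $j\in\F_{p^2}$. Invoking it here is therefore circular. Even granting it, you do not say how it yields $E[p^r]=0$; that would require showing that an ordinary curve has infinitely many non-isomorphic isogenous curves, which you do not address.

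The missing idea is short. If $E[p]\neq 0$, then $E[p^r](\bar k)\cong\Z/p^r\Z$ for all $r$, so $T_pE\cong\Z_p$. The ring map $\End(E)\to\End(T_pE)\cong\Z_p$ is injective, because a nonzero isogeny has finite kernel and so cannot kill all of $E[p^\infty]$. Hence $\End(E)$ is commutative. Alternatively: all endomorphisms are defined over some $\F_{q^n}$, so they commute with $\pi_q^n$. The centre of a quaternion order is $\Z$, so $\pi_q^n=[m]$, which makes $[m]$, and hence $[p]$, purely inseparable.

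A smaller point concerns Step~2. Passing to an extension cannot help, since $\pi_{q^n}=\pi_q^n$ stays in $\Z$ whenever $\pi_q$ does. Instead, you must exclude the degenerate case $t^2=4q$ explicitly for ordinary $E$. This is easy: it forces $(\pi_q-t/2)^2=0$, so $\pi_q=[\pm\sqrt q]$, and then $[p]$ is purely inseparable.

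The rest of your outline is sound. Your Tate-theorem argument for the supersingular half (a power of Frobenius is an integer, so $\End(E)\otimes\Z_\ell\cong M_2(\Z_\ell)$) is a valid, if heavier, substitute for Silverman's elementary argument. You are also right to invoke Deuring for maximality. The paper gives no proof of its own, and the Silverman theorem it cites asserts only that $\End(E)$ is \emph{an} order in a quaternion algebra.
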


\subsection{Coordinate Rings and Functoriality}
Let $H\subset E$ be  a finite subgroup scheme. We denote by $R_H$ the coordinate ring of $H$. When $H$ is
étale, $R_H$ is a finite $k$-algebra. We have $$ R_H\simeq\prod_{Z\subset H} k(Z) $$ where each $k(Z)$ is the
residue field at the closed point $Z\subset H$. Under this condition, each $Z$ is a Galois orbit of some
point $P\in H(\overline{k})$.
We have a functorial anti-equivalence (Cartier’s correspondence) between finite $k$-group schemes and finite
$k$-Hopf algebras.

\begin{rem}
This correspondence allows one to translate the action of group-scheme endomorphisms on points into an action
on the coordinate ring.
\end{rem}
Hence, any group-scheme endomorphism $f_H \colon H \to H$ induces by pullback a $k$-algebra endomorphism
$f_H^* \colon R_H \to R_H$, and conversely (see \cite{SGA3} for scheme-theoretic duality and
functoriality).

\subsection{Normal basis}
For more detail around links between normal basis and elliptic curves, we refer to \cite{CouveignesLercier2009}. To begin, recall that every finite extension of finite field is Galois.

\begin{defi}
A basis of $K$ over $k$ is normal if it is of the form $\{\sigma(\alpha)\mid\sigma \in \Gal(K/k)\}$ for some $\alpha \in K$.
\end{defi}

\begin{prop}
Let $k=\F_q$ be a finite field of characteristic $p$ and $\psi\in k[X]$ an irreducible polynomial of degree $n$ with $\gcd(n,p)=1$ and $K=k[X]/(\psi(X))$ the decomposition field of $\psi$. Denote by $(x_i)_{1\leq i\leq n}$ the roots of $\psi$. Then $(x_i)_{1\leq i\leq n}$ is a $k$-basis if and only if $\Trace_{K/k}(x_1)\neq 0$.
\end{prop}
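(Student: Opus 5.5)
The plan is to identify $K$ with a module over the group algebra $k[G]$, where $G=\Gal(K/k)=\langle \pi_q\rangle$ is cyclic of order $n$, and to read off both directions from the decomposition of that algebra. Since $\psi$ is irreducible of degree $n$ and $K=k[X]/(\psi(X))$ is its splitting field, the roots are exactly the conjugates $x_i=\pi_q^{\,i-1}(x_1)$. Saying that $(x_i)$ is a $k$-basis is therefore the same as saying that $x_1$ generates a normal basis, i.e.\ that $k[G]\cdot x_1=K$. Identify $k[G]\cong k[T]/(T^n-1)$, with $T$ acting as $\pi_q$. By the normal basis theorem, $K$ is free of rank one over $k[G]$. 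Hence $x_1$ generates $K$ if and only if its annihilator $\mathrm{Ann}(x_1)=\{g\in k[T] : g(\pi_q)(x_1)=0\}$ is trivial modulo $T^n-1$. Equivalently, $\gcd\bigl(T^n-1,\ \sum_{i=0}^{n-1}\pi_q^{\,i}(x_1)\,T^{n-1-i}\bigr)=1$; this is the classical Hensel/Ore criterion, which I would recall or quickly rederive.

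For the direction ``basis $\Rightarrow$ trace $\neq0$'': $\Trace_{K/k}(x_1)=\sum_i x_i$ is a sum of basis vectors with all coefficients equal to $1$, so it cannot vanish. For the converse, use $\gcd(n,p)=1$. Then $T^n-1$ is separable, so $k[G]\cong\prod_j k[T]/(\Phi_j(T))$ is a product of fields, indexed by the monic irreducible factors $\Phi_j$ of $T^n-1$ over $k$. Correspondingly $K$ splits as a direct sum of isotypic pieces $K=\bigoplus_j e_jK$ under the orthogonal idempotents $e_j$, and each $e_jK$ is one-dimensional over its field factor. Thus $x_1$ generates $K$ if and only if $e_jx_1\neq0$ for every $j$. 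For the factor $\Phi_1=T-1$, the idempotent is $e_1=\frac1n\sum_{\sigma\in G}\sigma$, which is defined because $n$ is invertible in $k$. Hence $e_1x_1=\frac1n\Trace_{K/k}(x_1)$, and the trace condition is exactly the nonvanishing of the trivial component.

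The main obstacle is the remaining components $e_jx_1$ for $\Phi_j\neq T-1$: the trace gives no direct information about them. If $(T^n-1)/(T-1)$ is irreducible over $k$, there is only one such component, $e_{\neq1}K=\Ker\Trace_{K/k}$. In that case I would argue that $e_{\neq1}x_1=0$ forces $x_1\in k$, since then $x_1$ is fixed by $G$; this contradicts irreducibility of $\psi$ when $n>1$, and the case $n=1$ is trivial. So the proof goes through whenever the $\Phi_j$ reduce to $T-1$ and one other factor. In general I would first try to show that each nontrivial component $e_jx_1$ is nonzero by exploiting that $x_1$ generates $K$ as a field. If $e_jx_1=0$, then all conjugates of $x_1$ lie in the proper $G$-stable subspace $(1-e_j)K$. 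I would try to show that this is incompatible with $k(x_1)=K$, for example by comparing dimensions of the $k$-span of the powers of $x_1$. This is the step I expect to be delicate. If it fails, the argument above still proves the statement in exactly the setting where it is applied later in the paper, namely with $n$ prime and $q$ of order $n-1$ modulo $n$.
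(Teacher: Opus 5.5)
Everything up to the step you call delicate is right: the direction basis $\Rightarrow$ $\Trace_{K/k}(x_1)\neq 0$, the criterion that $x_1$ is normal iff $e_jx_1\neq0$ for every irreducible factor $\Phi_j$ of $T^n-1$, and the identity $e_1x_1=\frac1n\Trace_{K/k}(x_1)$. The remaining step cannot be carried out, because the statement is false under the sole hypothesis $\gcd(n,p)=1$. The condition $k(x_1)=K$ only says that $x_1$ avoids each proper subfield $\F_{q^d}=\bigoplus_{\Phi_j\mid T^d-1}e_jK$ ($d\mid n$, $d<n$). It says nothing about a single component $e_jx_1$.

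For a counterexample, take $k=\F_3$, $n=4$, and let $y\in\F_{81}$ be a root of $Y^4+Y^2-1$, which is a factor of $Y^8+1$ over $\F_3$. Then $y$ has order $16$, hence degree $4$, and $y^9=-y$. The element $x=1+y$ is a root of the irreducible polynomial $\psi=X^4-X^3+X^2+1$, whose roots are $1+y,\,1+y^3,\,1-y,\,1-y^3$. Thus $\Trace_{K/k}(x)=4=1\neq0$, yet $x-x^3+x^9-x^{27}=0$. In other words, the component of $x$ along the factor $T+1$ of $T^4-1=(T-1)(T+1)(T^2+1)$ vanishes.

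Primality of $n$ alone does not help either. Over $\F_2$ with $n=7$, write the roots of $\psi=X^7+X^6+X^5+X^4+1$ as $x=1+y$ with $y^7+y^3+1=0$, so $y^8=y^4+y$. These roots have trace $1$ but satisfy $x+x^2+x^4+x^{16}=0$.

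So for $n\geq2$ the converse holds precisely when $N(T)=(T^n-1)/(T-1)$ is irreducible over $k$, i.e.\ $n$ is prime and $q$ is a primitive root modulo $n$. That is exactly the case your last argument proves. In general the correct criterion is $e_jx_1\neq0$ for all $j$ (equivalently, your gcd condition), and the trace only detects the trivial component.

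The paper's proof has the same hidden hypothesis. After writing $R\cong k\times K'$ with $K'=k[X]/(N(X))$, it asserts that $k\times0$ and $0\times K'$ are the only nonzero proper ideals of $R$, which holds only when $K'$ is a field. Under that hypothesis, its argument ($N\notin\Ker\alpha$, while $X-1\in\Ker\alpha$ would force $x\in k$) is the ideal-theoretic form of your idempotent argument. In the $\F_3$ example, $\Ker\alpha$ is the ideal generated by $(X-1)(X^2+1)$, which is neither $k\times0$ nor $0\times K'$. You were right to isolate this step: it should be replaced by the extra hypothesis on $N(T)$, not completed.
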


\begin{proof}
Since $\psi$ is irreducible of degree $n$, we have $[K:k]=n$, the extension $K/k$ is cyclic and
$G=\Gal(K/k)=\langle\pi_q\rangle,$ where $\pi_q\colon K\to K$ is the Frobenius automorphism. Set $x = x_1$, and order $(x_i)$ such that $\pi_q(x_i) = x_{i+1}.$

\smallskip

If $Tr(x) = \sum_i x_i = 0$, then $(x_i)_i$ are linearly dependent, so we may assume $Tr(x) \ne 0$. Via the isomorphism
$$
\iota\colon\fonction{k[G]}{k[X]/(X^n-1)}{\pi_q}{X},
$$
we identify $k[G]$ with $R := k[X]/(X^n-1)$. Consider the $k$-linear map
$$
\alpha\colon\fonction{R = k[X]/(X^n-1)}{K}{P(X)}{P(\pi_q)(x)}.
$$
Its image is the $k$-span of $(\pi_q^{\,i}(x))_{0\leq i\leq n-1}$, that is the $k$-span of $(x_i)_{1\leq i\leq n}$. Since $k[G]\simeq k[X]/(X^n-1)$ and $\dim_k k[G]=n=\dim_k K$, it suffices to show that $\Ker(\alpha)=0$.

\smallskip

Since $\gcd(n,p)=1$, the polynomial $N(X):=1+X+\cdots+X^{n-1}$ satisfies $N(1)=n\neq0$ in $k$, so $X-1$ and $N(X)$ are coprime in $k[X]$. By the Chinese Remainder Theorem, $R\cong k\times K'$, where $K'=k[X]/(N(X))$, and under this isomorphism $(N(X))$ corresponds to the ideal $k\times0$ while $(X-1)$ corresponds to $0\times K'$; these are the only nonzero proper ideals of $R$.

\smallskip

Suppose $\Ker(\alpha)\neq0$. Since $\alpha(N)=(1+\pi_q+\cdots+\pi_q^{n-1})(x)=\sum_{i=0}^{n-1}\pi_q^{\,i}(x)=\Trace_{K/k}(x)\neq0$ by assumption, $N\notin\Ker(\alpha)$, so $\Ker(\alpha)\neq k\times0$ and $\Ker(\alpha)\neq R$. If $\Ker(\alpha)=0\times K'$, then $X-1\in\Ker(\alpha)$, i.e. $\alpha(X-1)=\pi_q(x)-x=0$, so $x\in k$, contradicting $\deg\psi=n\geq2$. Hence $\Ker(\alpha)=0$. Therefore $\alpha$ is an isomorphism and $(x_i)_{1\leq i\leq n}$ is a $k$-basis of $K$.
\end{proof}

\begin{rem}
According to \cite{HuangHanCao2018,lenstra-schoof}, let $n=(\ell-1)/2$ be a prime number distinct from $p$, let $q$ be primitive modulo $n$, and let $\psi$ be an irreducible polynomial of degree $n$ with nonzero $X^{n-1}$-coefficient. Then $\psi$ is an $N$-polynomial, and its roots $(x_i)_{1\leq i\leq n}$ form a basis of the extension $K/k$.
\end{rem}


\section{Constructing automorphisms of finite fields from elliptic curves}\label{sec:construction}
In this section, we construct explicit  automorphisms of extensions generated by $\ell$-torsion coordinates,
using endomorphisms of the elliptic curve $E/k$. The key tool is the $\ell$-division polynomial $\psi_\ell$, whose
roots encode the $x$-coordinates of non-trivial $\ell$-torsion points.

\subsection{Case when $\ell$ is a prime}
Let $k$ be a finite field, and let $E/k$ be an elliptic curve. Fix an odd prime $\ell\neq\Char(k)$. Let $\psi_\ell$ denote the $\ell$-division polynomial of $E$.

Since $E[\ell] \cong (\mathbb{Z}/\ell\mathbb{Z})^2$,  there are $\frac{\ell^2-1}{2}$ nonzero points of order $\ell$ modulo the elliptic involution, which come in
pairs $\{P,-P\}$ with identical $x$-coordinates. For any ideal $\p$ of $\End(E)$ dividing $(\ell)$, we define the subscheme
$$
E[\p]=\bigcap_{\phi\in\p} \Ker(\phi)\simeq\Z/\ell\Z.
$$

\begin{lem}
Let $E/k$ be an elliptic curve over the finite field $k=\F_q$ and let $\ell\neq\mathrm{char}(k)$ be a prime.
\begin{enumerate}
\item If $E$ is ordinary and $\p\subset\End(E)$ is any  ideal dividing $(\ell)$, then
$$
E[\p]=\{P\in E[\ell]\mid \forall\alpha\in\p,  \alpha(P)=0 \}
$$
is a cyclic subgroup of order $\ell$, hence $E[\p]\subset E[\ell]$.
\item If $E$ is supersingular and $\p\subset\End(E)$ is a two-sided ideal of reduced norm $\ell$, then there exists an isogeny $\varphi_{\p}\colon E\to E/E[\p]$ corresponding to $\p$, whose kernel $E[\p]$ is a cyclic subgroup of $E[\ell]$ of order $\ell$.
\end{enumerate}
\end{lem}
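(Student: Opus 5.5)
The plan is to treat the two cases separately. In each, I will identify $E[\p]$ with the kernel of a single nonzero endomorphism or isogeny, and then read off its order from the $\End(E)/\p$-module structure of $E[\ell]$. Throughout I use that $E[\ell]\cong(\Z/\ell\Z)^2$ as an abstract group (since $\ell\neq\Char(k)$), and that $\End(E)$ acts on it. Because $\ell\in\p$, every point killed by all of $\p$ is in particular killed by $[\ell]$. Hence $E[\p]\subseteq E[\ell]$ holds immediately in both cases, and the real content is the cyclicity and the order $\ell$.

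\emph{Ordinary case.} Here $\OO=\End(E)$ is an order in an imaginary quadratic field, so $\OO\otimes\Z_\ell$ acts faithfully on $T_\ell(E)$. I would first note that $E[\ell]$ is a module over $\OO/\ell\OO$, which has $\F_\ell$-dimension $2$. Next I claim it is free of rank one. When $\ell$ does not divide the conductor, $\OO$ is locally maximal at $\ell$, so $T_\ell E$ is free of rank one over $\OO_\ell$, and reducing mod $\ell$ gives the claim. For $\p\mid(\ell)$ with $\p\neq(\ell)$, the ideal $\p$ has norm $\ell$, and then $E[\p]$ is the annihilator of $\p/\ell\OO$ in $E[\ell]\cong\OO/\ell\OO$. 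This annihilator is isomorphic to $\OO/\p\cong\F_\ell$, which is cyclic of order $\ell$. Concretely, I can write $\p=(\ell,\alpha)$ and compute $E[\p]=E[\ell]\cap\Ker(\alpha)$. The fact that $\deg\alpha$ has $\ell$-adic valuation exactly $1$ (when $\p$ is not ramified-squared) shows this intersection has order $\ell$.

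I expect the main obstacle to be the case where $\ell$ divides the conductor, or where the given ideal is all of $(\ell)$. Then $E[\ell]$ need not be free over $\OO/\ell$, and if $\p=(\ell)$ we get $E[\p]=E[\ell]$, which is not cyclic. So I will read ``dividing $(\ell)$'' as ``a proper prime divisor of norm $\ell$'', consistent with the main theorem. In the non-free situation I would handle it with the following argument: the annihilator of a norm-$\ell$ ideal is a nonzero proper $\OO$-submodule of the $2$-dimensional space $E[\ell]$, and hence is a line.

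\emph{Supersingular case.} Here $\OO$ is a maximal order in the quaternion algebra ramified at $p$ and $\infty$. Since $\ell\neq p$, we have $\OO\otimes\Z_\ell\cong M_2(\Z_\ell)$ and $E[\ell]\cong\F_\ell^2$ as an $M_2(\F_\ell)$-module. I will use the standard Deuring/Kohel correspondence: a left $\OO$-ideal $I$ of reduced norm $\ell$ corresponds to $I/\ell\OO$, a left ideal of $M_2(\F_\ell)$ of dimension $2$. Its common kernel $E[I]$ is then a line in $\F_\ell^2$, i.e.\ a cyclic subgroup of order $\ell$. Taking $\varphi_\p\colon E\to E/E[\p]$ to be the separable isogeny with this kernel (via Vélu or \cite[III.4.12]{Silverman1986}) gives the required isogeny of degree $\ell$. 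There is a caveat: genuinely two-sided ideals of norm $\ell\neq p$ in a maximal order do not exist, since the two-sided ideals at unramified primes are just $\ell^a\OO$. So I would interpret the hypothesis as ``left ideal of reduced norm $\ell$'' and remark on this.
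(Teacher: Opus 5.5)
Your proposal is correct and follows essentially the paper's route. In the ordinary case you use the annihilator of $\p/\ell\End(E)$ acting on $E[\ell]$, and in the supersingular case the Deuring correspondence, which you make explicit via $\End(E)/\ell\End(E)\cong M_2(\F_\ell)$; in both cases you supply details the paper only asserts. Your caveats are also justified. The ordinary claim fails for $\p=(\ell)$, for instance the prime above an inert $\ell$. Two-sided ideals of reduced norm $\ell\neq p$ do not exist, so part 2 as stated is vacuous and has content only for left ideals.

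The one soft spot is the unexplained ``nonzero'' in your fallback argument. It follows by writing $\p=\ell\End(E)+\Z\alpha$ and noting that $\det(\alpha|_{E[\ell]})\equiv\deg\alpha\equiv 0 \pmod{\ell}$, which works in every case. In fact the non-free case never occurs: for ordinary $E$, Tate's theorem makes $\End(E)\otimes\Z_\ell$ the exact multiplier ring of $T_\ell E$, and quadratic orders are Gorenstein, so $E[\ell]$ is always free over $\End(E)/\ell\End(E)$.
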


\begin{proof}
\begin{enumerate}
\item In the ordinary case $\End(E)\otimes\Q$ is an imaginary quadratic field and the action of $\End(E)$ on $E[\ell]\simeq(\Z/\ell\Z)^2$ factors through
$$
\End(E)/\ell\End(E)\hookrightarrow\End(E[\ell])\simeq M_2(\F_\ell).
$$
An  ideal $\p\mid(\ell)$ yields a nontrivial ideal in $\End(E)/\ell\End(E)$ whose annihilator on $E[\ell]$ is a one-dimensional $\F_\ell$-subspace; this annihilator is precisely $E[\p]$, of order $\ell$.

\item In the supersingular case $\End(E)$ is a maximal order in the quaternion algebra $B_{\p,\infty}$. If $\p$ is two-sided with reduced norm $\ell$, standard theory of ideals in maximal orders yields an isogeny $\varphi_{\p}\colon E\to E/\Ker\varphi_{\p}$ whose kernel has cardinality equal to the reduced norm, hence $\ell$. Hence by definition, we have $E[\p]=\Ker(\varphi_{\p})\subset E[\ell]$.

\end{enumerate}

\end{proof}

The set of points whose $x$-coordinates are roots of $\psi_\ell$ is such that we can define a new polynomial :

\begin{defi}
Let $H$ be a subscheme of the form $E[\p]$ as above. The kernel polynomial of $H$ is
\begin{align*}
\psi_H(x)&=\prod_{P\in (H\backslash\{0\}/\{\pm1\})}(x-x(P)).
\end{align*}
which is a factor of  $\psi_\ell$.
\end{defi}

Since $H \subset E[\ell]$ is stable under the action of $\Gal(\overline{k}/k)$, the set of $x$-coordinates 
$\{x(P) \colon P \in H\backslash\{0\}/\{\pm1\}\}$ is stable under $\Gal(\overline{k}/k)$, hence $\psi_H \in k[X]$.
We study the action of $\End(E)$ on $H$.

\begin{lem}\label{stabi}
A subscheme $H$ of $E[\ell]$ is of the form $E[\p]$ for some ideal $\p \subset \End(E)$ if and only if $H$ is stable under the action of $\End(E)$.
\end{lem}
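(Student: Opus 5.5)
The plan is to prove the two implications separately. The forward one is immediate; the substance is in the converse, where I attach to an $\End(E)$-stable $H$ the ideal $\p_H:=\{\phi\in\End(E)\mid \phi(H)=0\}$.

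\emph{($\Rightarrow$)} Let $H=E[\p]$ and $\alpha\in\End(E)$. For every $\phi\in\p$ we have $\phi\alpha\in\p$, because $\p$ is a left ideal (or two-sided in the supersingular case). So for $P\in H$ we get $\phi(\alpha(P))=(\phi\alpha)(P)=0$, hence $\alpha(P)\in\bigcap_{\phi\in\p}\Ker(\phi)=H$. This is a pointwise statement on $\overline{k}$-points. Since $E[\ell]$ is étale, it determines the subscheme, so $H$ is stable under $\alpha$.

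\emph{($\Leftarrow$)} Suppose $H\subseteq E[\ell]$ is stable under $\End(E)$.
\begin{enumerate}
\item Show that $\p_H$ is a two-sided ideal. It is clearly an additive subgroup. For $\alpha\in\End(E)$ and $\phi\in\p_H$, we have $\phi\alpha(H)\subseteq\phi(H)=0$ by stability, and $\alpha\phi(H)=\alpha(0)=0$.
\item Show that $\ell\in\p_H$, so $\p_H\supseteq(\ell)$ and $\p_H$ divides $(\ell)$ in the sense used in the paper.
\item Show that $E[\p_H]=H$. The inclusion $H\subseteq E[\p_H]$ holds by definition. For the reverse inclusion, work in $\End(E)/\ell\End(E)\hookrightarrow\End(E[\ell])\simeq M_2(\F_\ell)$, as in the proof of the previous lemma. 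It then suffices to produce enough elements of $\End(E)$ killing $H$ to cut out exactly $H$.
\end{enumerate}

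For the reverse inclusion, split into cases by the size of $H$.
\begin{itemize}
\item If $H=0$, take $\p_H=\End(E)$.
\item If $H=E[\ell]$, take $\p_H=(\ell)$.
\item If $H$ is a line $L\subset E[\ell]$, let $A$ be the image of $\End(E)$ in $M_2(\F_\ell)$. Stability of $L$ means $A$ lies in the Borel subalgebra fixing $L$. We need $\{a\in A: a(L)=0\}$ to have common kernel exactly $L$.
\end{itemize}

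In the ordinary case $A=\Z[\pi]/\ell$ or a quotient of the quadratic order, and I would use a non-scalar element. Take $a\in A$ not scalar mod $\ell$, for instance Frobenius or a generator of the order, possibly after adjusting by $\ell$-conductor considerations. Then $a$ acts on the stable line $L$ by some eigenvalue $\lambda$, and $a-\lambda$ kills $L$. Moreover $a-\lambda$ is nonzero, so its kernel is exactly $L$. In the supersingular case, $\End(E)\otimes\Z_\ell\simeq M_2(\Z_\ell)$ acts through all of $M_2(\F_\ell)$. In that case no line is stable, so only the trivial cases occur, consistent with two-sided ideals of norm $\ell$ there.

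The main obstacle is the ordinary case when $\End(E)$ acts by scalars mod $\ell$, i.e.\ $\ell$ divides the conductor of $\End(E)$. Then every line is stable, but $\p_H$ would only be $(\ell)$, so the converse genuinely fails. I expect to need the standing hypothesis, implicit in the paper, that $\p$ has norm $\ell$ and $\End(E)/\ell$ is not scalar. I would state this restriction explicitly and handle that degenerate case by excluding it.
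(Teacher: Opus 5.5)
\textbf{Gap: the claimed failure case does not exist.} Your forward direction and Steps 1--2 of the converse are fine. Your eigenvalue argument for a stable line $L$ also supplies exactly the step the paper only asserts, namely that $\p_H$ ``annihilates exactly $H$''. The genuine error is in your final paragraph. You claim that when $\ell$ divides the conductor of $\End(E)$, the ring $\End(E)$ acts on $E[\ell]$ by scalars, so every line is stable while $\p_H=(\ell)$, and the converse fails.

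This is false, and it contradicts the embedding $\End(E)/\ell\End(E)\hookrightarrow\End(E[\ell])$ that you invoke yourself. An endomorphism vanishing on $E[\ell]$ factors through the separable isogeny $[\ell]$, so it lies in $\ell\End(E)$. Since $\End(E)$ has rank $2$ in the ordinary case, its image in $M_2(\F_\ell)$ has $\ell^2$ elements and cannot lie inside the $\ell$ scalars. So a non-scalar element always exists: any $\alpha\in\End(E)\setminus(\Z+\ell\End(E))$ will do, and no ``conductor adjustment'' is needed. Your own argument then proves the lemma in full: $\alpha-\lambda\in\p_H$ is nonzero on $E[\ell]$, so $E[\p_H]\subseteq\Ker(\alpha-\lambda)\cap E[\ell]=L$. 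As written, you instead announce a counterexample that does not exist and settle for a weaker statement with an unnecessary hypothesis.

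\textbf{What actually happens when $\ell$ divides the conductor.} Suppose $\End(E)=\Z[f\omega]$, with $\Z[\omega]$ the maximal order of $\End(E)\otimes\Q$ and $\ell\mid f$. Then $\End(E)/\ell\End(E)\simeq\F_\ell[\epsilon]/(\epsilon^2)$, where $\epsilon$ is the class of $f\omega$. By injectivity, $\epsilon$ acts on $E[\ell]$ as a nonzero nilpotent. Hence $\Ker\epsilon$ is the \emph{unique} stable line, not every line, and it equals $E[\p]$ for $\p=(\ell,f\omega)$.

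The phenomenon you had in mind concerns $\Z[\pi]$, with $\pi$ the Frobenius endomorphism, rather than $\End(E)$. Frobenius acts as a scalar on $E[\ell]$ precisely when $\ell$ divides $[\End(E):\Z[\pi]]$. That makes every line $k$-rational, but not $\End(E)$-stable.

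\textbf{Minor slip.} In the forward direction, having $\phi\circ\alpha\in\p$ for all $\phi\in\p$ requires $\p$ to be a right (or two-sided) ideal, not a left one. This is harmless, since you take $\p$ two-sided in the noncommutative case.
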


\begin{proof}
Assume first that $H = E[\p]$ for an  ideal $\p \subset \End(E)$.
By definition,
$$
E[\p] = \bigcap_{\phi \in \p} \Ker(\phi).
$$
Let $\gamma \in \End(E)$ and $P \in H$.
For every $\phi \in \p$ we have $\phi(P)=0$, hence
$$
\phi(\gamma(P)) = (\phi \circ \gamma)(P) = 0,
$$
since $\phi\gamma \in \p$.
Thus $\gamma(P) \in \Ker(\phi)$ for all $\phi \in \p$, which means $\gamma(P) \in E[\p] = H$.
Therefore, $H$ is stable under the action of $\End(E)$.

\medskip

Conversely, assume that $H \subset E[\ell]$ is a finite subgroup stable under $\End(E)$.
Define $$
\p_H = \{ \phi \in \End(E) \mid  H\subset\Ker(\phi) \}.
$$
Moreover, by definition $\phi(Q)=0$ for all $Q \in H$ and $\phi \in \p_H$, so $H \subseteq E[\p_H]$.
Conversely, since $H$ is cyclic of order $\ell$ and stable, the ideal $\p_H$ is maximal and annihilates exactly $H$. Hence $E[\p_H] \subseteq H$, and the reverse inclusion is clear, so $E[\p_H] = H$.
\end{proof}

\begin{lem}\label{lem:stability}
Let $H=E[\p] \subseteq E[\ell]$ be a finite étale subgroup and let  $K \subset R_H$ the $k$-subalgebra generated by the coordinates
$\{x(P)\}_{P \in H}$.
If $f \in \End(E)$, then $f^\ast_{|K} \colon K \to K$ is a $k$-algebra automorphism.
\end{lem}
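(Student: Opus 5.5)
The plan is to obtain $f^\ast_{|K}$ by restricting $f$ to $H$ and using the functoriality recalled in the background section, and then to show it is invertible by producing an explicit inverse endomorphism. I will assume, as in \cref{thm::final}, that $f$ is defined over $k$ and that $f\notin\p$. The latter follows, for instance, when $\gcd(\deg f,\ell)=1$. Without such a hypothesis the statement fails: every $f\in\p$ kills $H$, and $f^\ast$ then collapses $K$ onto $k$.

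\textbf{Step 1: $f$ induces a $k$-algebra map of $R_H$.} By \cref{stabi}, $H=E[\p]$ is stable under $\End(E)$, so $f$ restricts to a group-scheme endomorphism $f_H\colon H\to H$. Since $f$ is defined over $k$, so is $f_H$. Cartier's anti-equivalence between finite $k$-group schemes and finite $k$-Hopf algebras then gives a $k$-algebra endomorphism $f_H^\ast\colon R_H\to R_H$.

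\textbf{Step 2: $f_H^\ast$ preserves $K$.} Every endomorphism commutes with $[-1]$, so $f$ descends to a map $\bar f$ of the Kummer line $E/\{\pm1\}=\mathbb{P}^1$, given by a rational function with coefficients in $k$. This yields the identity $x\circ f=\bar f\circ x$, that is, $f^\ast(x)=\bar f(x)$ as functions on $H$. Concretely, at each closed point $Z\subset H$, the image of $x$ in $k(Z)$ is sent to $\bar f$ evaluated at $x(f(P))$'s preimage relation; equivalently, $x(f(P))=\bar f(x(P))$. Since $f(P)\in H$, this value is again one of the $x$-coordinates $x(Q)$ with $Q\in H$. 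Hence $f^\ast$ maps each generator of $K$ into $K$, and being a $k$-algebra map, it maps $K$ into $K$. One small point must be checked: $\bar f$ has no pole at $x(P)$ for $P\in H\setminus\{0\}$. This holds because $f(P)\neq 0$, by Step 3.

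\textbf{Step 3: bijectivity, which I expect to be the main obstacle.} For $\ell$ prime, $H\simeq\Z/\ell\Z$ and $\End(E)$ acts on $H$ through $\End(E)/\p\hookrightarrow\End(H)=\F_\ell$. So $f$ acts on $H$ as multiplication by a scalar $\lambda$. This scalar is nonzero exactly when $f\notin\p$, and in particular whenever $\deg f=f\hat f$ is prime to $\ell$. Choose an integer $m$ with $m\lambda\equiv1\pmod\ell$, and set $g=[m]$. Then $g$ is defined over $k$, and $g_H\circ f_H=f_H\circ g_H=\mathrm{id}_H$. By contravariant functoriality, $f_H^\ast\circ g_H^\ast=g_H^\ast\circ f_H^\ast=\mathrm{id}_{R_H}$. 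By Step 2 both maps preserve $K$, so $f^\ast_{|K}$ is a bijective $k$-algebra endomorphism of $K$, i.e.\ an automorphism.

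Alternatively, since $K$ is a finite-dimensional $k$-algebra, it is enough to show that $f^\ast_{|K}$ is injective. When $K$ is a field, as in the situation of \cref{thm::final}, injectivity is automatic for any nonzero ring map. The explicit inverse above is nonetheless more robust, since it also covers the case where $K$ is only a product of fields.
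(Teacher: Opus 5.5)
Your argument is correct, but it takes a different route from the paper's proof. The paper gives a one-line structural argument. It writes $R_H\simeq\prod_{P}k(P)$ and asserts that $f^\ast$ permutes the primitive idempotents according to $P\mapsto f(P)$. From this it concludes that $f^\ast$ is an automorphism of $R_H$, and then simply restricts to $K$.

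That argument tacitly assumes the two things you prove separately:
\begin{enumerate}
\item That $f$ acts bijectively on $H$. As you correctly point out, this fails for $f\in\p$. So the hypotheses $f\notin\p$ (e.g.\ $\gcd(\deg f,\ell)=1$) and ``$f$ defined over $k$'' must be imported from Theorem~\ref{thm::final}.
\item That $f^\ast$ actually maps $K$ into itself, which the paper never checks.
\end{enumerate}

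Your Step~2 fills the second gap. You use $x\circ f=\bar f\circ x$ with $\bar f=A/B\in k(X)$, and observe that $B(x(P))\neq 0$ because $f(P)\neq 0$. Hence $B(x)$ is a unit of $K$ and $f^\ast(x)=A(x)B(x)^{-1}\in K$.

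Your Step~3 replaces the idempotent-permutation argument with an explicit inverse $[m]^\ast$. It uses that $f$ acts on $H\simeq\Z/\ell\Z$ as a scalar $\lambda$, which is nonzero because $\hat f f=[\deg f]$.

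The trade-off is as follows. Once bijectivity of $f|_H$ is known, the paper's idempotent argument works for any finite étale $H$, with no use of cyclicity or primality of $\ell$. Yours uses $H\simeq\Z/\ell\Z$, but it is fully explicit and realizes the inverse by an endomorphism of $E$. It also works verbatim when $K=k[X]/(\psi_H)$ is only a product of fields.

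Two expository points. First, the sentence in Step~2 about ``$x(f(P))$'s preimage relation'' is garbled. Inside $R_H$, the algebra $K$ is generated by the single function $x$ restricted to $H\setminus\{0\}$, not by the values $x(Q)\in\overline{k}$. What you need, and essentially state, is the unit argument above. Second, for $f\in\p$ the map $f^\ast$ does not ``collapse $K$ onto $k$''. Rather, $x\circ f$ is identically $\infty$ on $H$, so $f^\ast$ is not even defined on $K$. Either way, your counterexample to the unrestricted statement stands.
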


\begin{proof}
Since $ R_H\simeq \prod_{P \in H} k(P)$ and Lemma \ref{stabi}, the map $f^\ast$ permutes the factors via $ P\mapsto f(P)$ and is therefore bijective as a $k$-algebra map. \
In particular, for every idempotent $e_P$ corresponding to the point $P \in H$, we have $f^\ast(e_P) = e_{f(P)}$,
So $f^\ast$ acts as a permutation of primitive idempotents.

This shows that $f^\ast$ is an automorphism, and the restriction $f^\ast_{|K}$ is hence a $k$-automorphism.
\end{proof}

Let $K$ be the splitting field of $\psi_H=\prod_{P\in (H\backslash\{0\}/\{\pm1\})}(x-x(P))$ over $k$. Let $S\subset H$ be a set   of coset representatives for $(H\backslash\{0\}/\{\pm1\})\simeq x(H\backslash\{0\})=x(P_i)_{1 \leq i \leq n}$. We have, for any point $P=P_1\in H$,
$$
S=\{x(P_i)\}_{1\le i\le n}=\{x(iP)\}_{1\le i\le n}=(x_i)_{1\le i\le n}.
$$

Suppose $S$ forms a  basis of $K/k$. Take  an endomorphism $f\in\End(E)$ such that $\deg(f)$ is coprime to $\ell$. Denote by $\bar{f}$ the restriction to $K = \mathbb{P}^1(K) \subset \mathbb{P}^1(\overline{k})$ of the map induced by $f$ on the Kummer curve $E/\{\pm1\}\simeq\mathbb{P}^1$.  We define the $k$-linear map 
$$
\sigma_f\colon\fonction{K}{K}{\sum_{i=1}^{n}\alpha_i
x_i }{\sum_{i=1}^{n} \alpha_i \bar{f}(x_i)}.
$$
We obtain the following commutative diagram of group morphisms :
$$
\begin{tikzcd}
E[\ell] \arrow[r, "f"] \arrow[d, "P \mapsto x(P)"'] & E[\ell] \arrow[d, "P \mapsto 
x(P)"] \\
\mathbb{P}^1 (K) \arrow[r, "\bar{f}"] & \mathbb{P}^1 (K)
\end{tikzcd}
$$

\begin{prop}\label{1}
Let $f\in\End(E)$ be an endomorphism of $E$. The map $\sigma_f \colon K \to K$ induced by the endomorphism $f$  is a field homomorphism.
\end{prop}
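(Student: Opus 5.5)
The plan is to show that $\sigma_f$ coincides with a power of the Frobenius $\pi_q$. A power of $\pi_q$ is a field automorphism of $K$, so this gives the claim. Throughout, $\sigma_f$ is taken to be $k$-linear, i.e. the coefficients $\alpha_i$ lie in $k$. For coefficients in $K$ the formula would not even be well defined, since the $x_i$ are not a $K$-basis.

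\textbf{Step 1: $f$ acts on $H$ as a nonzero scalar.} By Lemma~\ref{stabi}, $H=E[\p]$ is stable under $\End(E)$. Since $H\simeq \Z/\ell\Z$ is cyclic of prime order, there is $\lambda\in\Z/\ell\Z$ with $f(Q)=\lambda Q$ for all $Q\in H$. Because $\hat f\circ f=[\deg f]$ and $\gcd(\deg f,\ell)=1$, $f$ is injective on $H$, so $\lambda\in(\Z/\ell\Z)^\ast$. Consequently
$$\bar f(x(iP))=x(\lambda iP) \quad\text{for every } i.$$
So $\bar f$ permutes the set $S=\{x(iP)\}_{1\le i\le n}$ according to multiplication by $\lambda$ on $(\Z/\ell\Z)^\ast/\{\pm1\}$.

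\textbf{Step 2: Frobenius acts on $H$ as a generating scalar.} The group $H$ is defined over $k$, hence $\Gal(\overline{k}/k)$-stable. So $\pi_q$ also acts on $H$ as multiplication by some $\mu\in(\Z/\ell\Z)^\ast$, and $\pi_q(x(iP))=x(\mu iP)$. The hypothesis that $S$ is a $k$-basis of $K$ forces $[K:k]=n=\#S$. Since $K$ is generated by the elements of $S$, the Frobenius orbit of $x(P)$ has length $[K:k]=n$ and therefore is all of $S$; equivalently, $\psi_H$ is irreducible. Thus the class of $\mu$ generates the cyclic group $(\Z/\ell\Z)^\ast/\{\pm1\}$ of order $n$.

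\textbf{Step 3: identify $\sigma_f$ with a Frobenius power.} By Step 2 there is $j$ with $\lambda\equiv\pm\mu^{j}$. Hence
$$\bar f(x_i)=x(\lambda iP)=x(\mu^j iP)=\pi_q^{\,j}(x_i) \quad\text{for all } i.$$
Both $\sigma_f$ and $\pi_q^{\,j}$ are $k$-linear, and they agree on the basis $(x_i)$. Therefore $\sigma_f=\pi_q^{\,j}$, which is multiplicative and a field homomorphism (indeed an automorphism).

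The main obstacle is Step 2: one has to extract from the basis hypothesis that the Galois action is transitive on $S$. Transitivity is exactly what makes the scalar $\lambda$ realisable by Frobenius. Without it, $\bar f$ could permute the roots across different Galois orbits, and multiplicativity would fail. I would handle this via the dimension count above. I would also note where $\ell$ prime is used: it is what makes $H$ cyclic of prime order, so that $\End(E)$ and Galois both act through the abelian group $(\Z/\ell\Z)^\ast$ and the two actions are forced to be compatible.
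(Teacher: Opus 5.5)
Your argument is correct, and it takes a genuinely different route from the paper's.

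\textbf{The paper's route.} The paper argues scheme-theoretically. It writes $R_H\simeq\prod_P k(P)$ and invokes Lemma~\ref{lem:stability}: the pullback $f^\ast$ permutes primitive idempotents, so it is a $k$-algebra automorphism of $R_H$. It then identifies $\sigma_f$ with the restriction of $f^\ast$ to the subalgebra generated by the $x$-coordinates. This route is meant to be functorial and independent of the cyclic structure of $H$, which is how the paper hopes to pass to composite $\ell$. However, it does not explicitly use the basis hypothesis. The identification of the linearly defined map $x_i\mapsto\bar f(x_i)$ with $f^\ast|_K$ is asserted rather than checked: pullback acts by precomposition and maps between residue fields $k(f(Z))\to k(Z)$ of different closed points.

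\textbf{Your route.} You reduce everything to the abelian group $(\Z/\ell\Z)^\ast/\{\pm1\}$. Both $f$ and $\mathrm{Frob}$ act on $H$ by scalars $\lambda,\mu$, and the basis hypothesis forces $\bar\mu$ to be a generator. Hence $\lambda\equiv\pm\mu^j$, so $\sigma_f=\pi_q^{\,j}$ on a basis and therefore on all of $K$.

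\textbf{What your route buys.} Multiplicativity is actually verified rather than asserted. It shows exactly where each hypothesis enters: $\gcd(\deg f,\ell)=1$ gives $\lambda\neq0$, and the basis condition gives Galois transitivity on the roots. It also yields the sharper conclusion that $\sigma_f$ is an explicit Frobenius power. This is what lies behind the paper's later remark that $\sigma_f$ acts as multiplication by the class of $f$.

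Your observation that transitivity is essential is also on target. When $\psi$ is only a proper factor of $\psi_H$, as Theorem~\ref{thm::final} allows, multiplicativity can fail. For instance, if $\mathrm{Frob}$ acts on $H$ by $\pm1$, take $\psi=X-x(P)$ with $x(P)\neq0$. Then $K=k$, and $\sigma_{[2]}(1)=x(2P)/x(P)\neq1$ for $\ell\geq5$. The cost of your route is that it is tied to $H$ being cyclic with $S$ the full root set of $\psi_H$.

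\textbf{One line to add.} In Step~2, state why the Frobenius orbit of $x(P)$ has length $[K:k]$. One way: each $x(iP)$ is a rational function over $k$ of $x(P)$, so $K=k(x(P))$. Alternatively, $\pi_q^m(x(P))=x(P)$ forces $\mu^m=\pm1$, so $\pi_q^m$ fixes all of $S$ and hence $K$.
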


\begin{proof}
We know $E/k$ can be viewed as a group scheme of finite type.

Let $H \subset E[\ell]$ be a finite (cyclic) subgroup of order $\ell$. Since $H$ is
finite étale over $k$, its coordinate ring $R_H$ is a finite étale $k$-algebra of rank $\ell$.\
In particular, we have $R_H \simeq \prod_{P \in H} k(P)$, where $k(P)$ denotes the residue field at the closed point $P$.

The maximal ideal $\mathfrak{m}_O$ corresponds to the origin, and since $f(O) = O$, then $f^\ast$ preserves it. The induced map on the
quotient $R_H / \mathfrak{m}_O \simeq k$ is the identity, and $f^\ast$ restricts to an endomorphism of the subalgebra $K \subset
R_H$ generated by the $x$-coordinates of non-trivial torsion points.

Now, let $f \in \End(E)$ be an endomorphism of the elliptic curve. Lemma \ref{stabi} ensures that $f(H)=H$. Then, with  Lemma~\ref{lem:stability}, $f$ induces a group scheme automorphism
$f_H \colon H\to H$, and the corresponding map $f^\ast \colon R_H \to R_H$ on the coordinate ring acts by precomposition on
regular functions, i.e., for all $\gamma \in R_H$, we have $f^\ast(\gamma) = \gamma \circ f$.

We therefore see that any endomorphism $f \in \mathrm{End}(E)$ preserving $H$ naturally acts on the extension $K/k$ via the
map $\sigma_f$.

By Lemma~\ref{lem:stability}, the restriction $f^\ast_{|K}$ is a $k$-automorphism. Hence $\sigma_f$ is a field homomorphism.
\end{proof}

\begin{prop}\label{2}
Assume $(x_i)_{1 \le i \le n}$ forms a  basis of $K/k$, let $f \in \End(E)$ be an endomorphism defined
over $k$. Then $f$ induces a Galois group element  $\sigma_f$ in $\Gal(K/k)$, given  by
$$
\sigma_f\Big(\sum_{i=1}^{n} \alpha_i x_i\Big) = \sum_{i=1}^{n} \alpha_i \bar{f}(x_i).
$$
\end{prop}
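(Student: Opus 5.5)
The idea is to show that $\bar f$ permutes the roots of $\psi$ exactly as some power of the Frobenius $\pi_q$ does; then $\sigma_f$, which is that power of $\pi_q$ read on the basis $(x_i)$, lies in $\Gal(K/k)$. Since the statement only uses that $f$ is defined over $k$, we take $\deg f$ coprime to $\ell$ as in the construction preceding Proposition~\ref{1}, so that $f$ acts bijectively on $H$.

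\textbf{Step 1: $f$ acts on $H$ through $(\Z/\ell\Z)^\ast$.} By Lemma~\ref{stabi}, $f(H)\subseteq H$. Since $H=E[\p]\simeq\Z/\ell\Z$ is cyclic of prime order, $f$ restricts on $H$ to multiplication by some $a\in\Z/\ell\Z$, i.e. $f(P)=[a]P$ for all $P\in H$. Because $\gcd(\deg f,\ell)=1$, we have $\Ker f\cap H=0$, so $a\neq 0$. Similarly the Frobenius $\pi_q$ preserves $H$, being defined over $k$ and $H$ being $k$-rational. Hence it acts on $H$ as $[c]$ for some $c\in(\Z/\ell\Z)^\ast$.

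\textbf{Step 2: identify $\bar f$ with a Galois element on the roots of $\psi$.} Write $x_i=x(P_i)$ for the roots of $\psi$. Since $\psi$ is irreducible, $\{x_i\}=\{x([c^j]P_1)\}_{0\le j<n}$, which is the orbit of $\pm P_1$ under $\langle \pm c\rangle\subseteq(\Z/\ell\Z)^\ast/\{\pm1\}$. The main step is to show that $\pm a$ lies in the subgroup $\langle\pm c\rangle$, so that $\pm a\equiv \pm c^{m}$ for some $m$. Then
$$\bar f(x_i)=x(fP_i)=x([c^m]P_i)=\pi_q^m(x_i)$$
for every $i$, and $\bar f$ permutes the roots of $\psi$ exactly as $\pi_q^m$ does.

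\textbf{This containment is the expected obstacle.} To obtain it I would use that $f$ commutes with $\pi_q$, since $f$ is defined over $k$. Then $\bar f(x_1)=x(fP_1)$ is a root of $\psi_H$, and it lies in $K$: its $q^n$-Frobenius image is $x(f\pi_q^nP_1)=x(\pm fP_1)$. Moreover $\bar f(x_1)$ is conjugate to $x_1$, because $\bar f$ is a rational map defined over $k$ and $\bar f(x_1)$ generates the same residue field. I expect some care is needed to ensure it is a root of the same factor $\psi$. If this fails in general, I would instead argue that $[a]$ on $H$ equals the action of an element of the image of $\End(E)\cap k$-rational endomorphisms, namely $\Z[\pi_q]$ modulo $\p$. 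In the ordinary case, or whenever $\End_k(E)\otimes\Q=\Q(\pi_q)$, we can write $f=r(\pi_q)$ with $r\in\Q[T]$ whose denominator is prime to $\ell$, after multiplying by a suitable integer. Reducing modulo $\p$ then gives $a=r(c)$. I would then need $r(c)\in\langle \pm c\rangle$, which again requires the irreducibility hypothesis on $\psi$ (that $n$ equals the order of $\pm c$). This is the delicate point.

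\textbf{Step 3: conclude.} Once $\bar f(x_i)=\pi_q^m(x_i)$ for all $i$, the map $\sigma_f$ and the automorphism $\pi_q^m$ are both $k$-linear and agree on the basis $(x_i)$, so $\sigma_f=\pi_q^m\in\Gal(K/k)$. Linearity over $k$ follows from the definition. For coefficients $\alpha_i\in K$ as written in the statement, one interprets the sum via the $k$-basis, and Proposition~\ref{1} ensures that $\sigma_f$ is a field homomorphism, consistent with this identification.
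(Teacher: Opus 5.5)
\textbf{There is a genuine gap, at the step you flagged.} Your reduction is sound, and Step 3 is correct once its input is available. That input is the containment $\pm a\in\langle\pm c\rangle$, equivalently that $\bar f(x_1)$ is a root of $\psi$ itself rather than of another factor of $\psi_H$. It is never proved, and neither of your attempts can supply it.

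\begin{itemize}
\item ``$\bar f(x_1)$ generates the same residue field'' does not give conjugacy. Since $\langle\pm c\rangle$ acts freely on $(\Z/\ell\Z)^\ast/\{\pm1\}$, every irreducible factor of $\psi_H$ has degree $n$. So all roots of $\psi_H$ lie in and generate $K$ without being conjugate to one another.
\item ``$n$ equals the order of $\pm c$'' holds for \emph{every} irreducible factor, so it says nothing about $r(c)$.
\end{itemize}

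\textbf{The containment is false under the stated hypotheses.} Suppose $\psi\neq\psi_H$. Then $\langle\pm c\rangle$ is a proper subgroup of $(\Z/\ell\Z)^\ast/\{\pm1\}$, so you can pick an integer $a$ with $\pm a\notin\langle\pm c\rangle$ and set $f=[a]$. This $f$ is defined over $k$, is nonzero, and has $\deg f=a^2$ prime to $\ell$. Yet $\sigma_f(x_1)=x([a]P_1)$ is a root of a different irreducible factor of $\psi_H$. Every element of $\Gal(K/k)$ sends $x_1$ to a root of $\psi$, so $\sigma_f\notin\Gal(K/k)$. For instance, with $\ell=13$ and $\pi_q$ acting on $H$ by $3$, $\psi_H$ is a product of two cubics and $[2]$ exchanges their root sets.

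\textbf{What is missing is an extra hypothesis.} One option is $\psi=\psi_H$, i.e.\ $\pm c$ generates $(\Z/\ell\Z)^\ast/\{\pm1\}$; the paper imposes this later, in Algorithm~\ref{alg:sigmaf} and Corollary~\ref{isomorphisme}(2). The other is to assume directly that $\bar f$ preserves the root set of $\psi$. With either, your argument is complete.

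The paper's one-line proof has the same hole. There, $f^\ast$ permutes the factors of $R_H$, and it induces an automorphism of the residue field attached to $\psi$ only if $f$ fixes the corresponding closed point. That is exactly the containment you could not establish.
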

\begin{proof}
By Lemma \ref{stabi}, we have $f(H) = H$, the endomorphism $f$ induces  a
homomorphism $f^\ast$ on the coordinate ring $R_H$, and its restriction $\sigma_f$ to the subalgebra $K$ is therefore a field
automorphism.
\end{proof}

Now, we state the main result in the case where $\ell$ is prime:

\begin{prop}\label{thm:main}
Let $k = \F_{q} $ be a finite field of characteristic $p$ and let $\ell$ be a prime such that $\ell\neq p$.
Let $E/k$ be an elliptic curve, and let $\p$ be an ideal  of $\End(E)$ of  norm $\ell$.
Denote by $H=E[\p]$  the subscheme of $E[\ell]$ defined by
$$
H=E[\p]=\bigcap_{\phi\in\p} \Ker(\phi)\simeq \Z/\ell\Z .
$$
Denote by $\psi$ an irreducible factor of the kernel polynomial $\psi_H$ of $H$ and  the degree of $\psi$ by $n=\deg(\psi)$. Let $K$ be the splitting field of $\psi$ over $k$, and suppose that the roots $(x_i)_{1 \le i \le n}$ of $\psi$ form a basis of $K/k$.
If $f \in \End(E)$ is a non trivial endomorphism defined over $k$ whose degree is coprime to $\ell$, then $f$ induces an automorphism
$$
\sigma_f \in \Gal(K/k), \quad \text{defined by} \quad \sigma_f\left( \sum_{i=1}^{n} \alpha_i x_i \right) = \sum_{i=1}^{n} \alpha_i \bar{f}(x_i),
$$
for any $\alpha_i \in K$, where $\bar{f}$ is the induced endomorphism of the Kummer curve $\mathbb{P}^1(k) = E/\{\pm1\}$.
\end{prop}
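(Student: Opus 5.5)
The plan is to reduce everything to a scalar action on the cyclic group $H=E[\p]\simeq\Z/\ell\Z$. Then I will identify $\sigma_f$ with a power of the Frobenius $\pi_q$ by checking it on the basis $(x_i)_{1\le i\le n}$.

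\textbf{Step 1: $f$ acts on $H$ by a nonzero scalar.} By Lemma~\ref{stabi}, $H$ is stable under $\End(E)$. Since $H$ is cyclic of order $\ell$, every $f\in\End(E)$ acts on $H$ as multiplication by some $\lambda_f\in\F_\ell$. In other words, the action factors through $\End(E)/\p\simeq\F_\ell$. Because $\gcd(\deg f,\ell)=1$, we have $H\cap\Ker f=0$, so $\lambda_f\neq0$ and $f_{|H}$ is an automorphism. This is the group-theoretic content behind Lemma~\ref{lem:stability}. Passing to the Kummer line, and using $x(-Q)=x(Q)$, gives
\[
\bar f\bigl(x(Q)\bigr)=x\bigl(\lambda_f Q\bigr) \quad\text{for all } Q\in H\setminus\{0\}.
\]
So $\bar f$ permutes the set $x(H\setminus\{0\})$ according to the class of $\lambda_f$ in $\F_\ell^\ast/\{\pm1\}$.

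\textbf{Step 2: the Frobenius acts on $H$ by a scalar too.} The $q$-power Frobenius endomorphism of $E$ lies in $\End(E)$, so it also acts on $H$ by some scalar $\mu\in\F_\ell^\ast$. On $x$-coordinates it is exactly the field Frobenius: $\pi_q(x(Q))=x(\mu Q)$. Hence the roots of the irreducible factor $\psi$ form one Galois orbit, namely
\[
\{x(\mu^iP)\}_{0\le i<n} \quad\text{for a fixed } P,
\]
and $n$ is the order of $\mu$ in $\F_\ell^\ast/\{\pm1\}$.

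\textbf{Step 3: conclude once $\lambda_f$ lies in the Frobenius orbit.} Suppose $\lambda_f\equiv\pm\mu^j$ for some $j$. Then
\[
\bar f(x_i)=x(\lambda_f\mu^iP)=x(\mu^{i+j}P)=\pi_q^j(x_i)
\]
for every $i$. By $k$-linearity and the hypothesis that $(x_i)$ is a basis, $\sigma_f$ coincides with $\pi_q^j$ on all of $K$. Hence $\sigma_f\in\Gal(K/k)$, and it is automatically a field automorphism, consistent with Propositions~\ref{1} and~\ref{2}. I would also note that $f$ being defined over $k$ means $f$ commutes with the Frobenius. Consequently $\bar f$ is given by a rational function with coefficients in $k$, and $\bar f(x_1)\in K$.

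\textbf{Main obstacle.} The hard step is justifying $\lambda_f\in\pm\langle\mu\rangle$, i.e.\ that $\bar f$ maps the roots of $\psi$ to roots of $\psi$ rather than to roots of another irreducible factor of $\psi_H$. Here is what is already available. Since $\bar f$ is $k$-rational, $\bar f(x_1)$ is a conjugate-compatible element of $K$. Its minimal polynomial has degree $n$, because $\lambda_f$ is invertible, so $x_1$ is also a $k$-rational function of $\bar f(x_1)$. But this does not by itself place $\bar f(x_1)$ in the same orbit.

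I would try first to use that the image of $\End(E)$ in $\End(E)/\p$ can be controlled. In the ordinary case with $\Z[\pi_q]$ of index prime to $\ell$, every element of $\End(E)$ is congruent modulo $\p$ to a polynomial in the Frobenius. Even then, the image of $\pi_q$ need not generate all of $\F_\ell^\ast/\{\pm1\}$, so in general this step may require either the extra assumption that $\psi=\psi_H$ (so that $n=(\ell-1)/2$), or replacing $\psi$ by the $\langle\pm\mu,\pm\lambda_f\rangle$-orbit. I expect the correct statement to need one of these refinements. In that case the homomorphism $(\End(E)/\p)^\ast\to\Gal(K/k)$ from the abstract would be defined on the subgroup generated by $\pm\mu$.
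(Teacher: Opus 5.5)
The step you flag cannot be closed, and your suspicion is right: the proposition is false as stated. If $\sigma_f\in\Gal(K/k)$, then $\sigma_f(x_1)=\bar f(x_1)$ is a conjugate of $x_1$, i.e.\ a root of $\psi$. So, in your notation, the conclusion holds \emph{exactly} when $\lambda_f\in\pm\langle\mu\rangle$, and nothing in the hypotheses forces this.

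Here is a concrete counterexample. The curve $E\colon y^2=x^3+4x+8$ over $k=\F_{11}$ has $\#E(k)=10$. Hence $\pi=1+\sqrt{-10}$ and $\End(E)=\Z[\sqrt{-10}]$. The ideal $\p=(5,\pi-1)$ has norm $5$, and $H=E[\p]=E(k)[5]=\{0,\pm(9,5),\pm(7,7)\}$, so $\mu=1$. Thus $\psi_H=(X-9)(X-7)$. Take $\psi=X-9$; then $K=k$, $n=1$, and $(x_1)=(9)$ is a basis. For $f=[2]$ (degree $4$) we get $\bar f(9)=x(2\cdot(9,5))=7$. So $\sigma_f$ is multiplication by $7\cdot 9^{-1}=2$ on $\F_{11}$, which is not even a ring map, whereas $\Gal(K/k)=\{\mathrm{id}\}$.

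The paper's proof passes over exactly this point. Lemma~\ref{lem:stability} only shows that $f^\ast$ permutes the primitive idempotents of $R_H$, i.e.\ the closed points of $x(H\setminus\{0\})$, which are the irreducible factors of $\psi_H$. Propositions~\ref{1} and~\ref{2} then assume without argument that the factor $k[X]/(\psi)$ is sent to itself. In the example, $[2]$ swaps the two factors of $k[X]/(\psi_H)\simeq k\times k$.

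What is true is the statement under the extra hypothesis $\lambda_f\in\pm\langle\mu\rangle$. This is automatic when $\psi=\psi_H$, which is the setting of Corollary~\ref{isomorphisme}(2) and of both examples in Section~\ref{sec:applications}. Under that hypothesis your Steps 1--3 are a complete proof, and more explicit than the paper's, since they identify $\sigma_f=\pi_q^{j}$.

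Your second proposed repair does not work, however. The product of the factors of $\psi_H$ over a $\langle\pm\mu,\pm\lambda_f\rangle$-orbit has more than $[K:k]$ roots, all lying in $K$, so they cannot form a $k$-basis of $K$. Moreover, $\bar f$ then moves roots between distinct Frobenius orbits, which no element of $\Gal(K/k)$ does. On $k[X]/(\prod\psi')\simeq K\times\cdots\times K$, the map $f^\ast$ is only an algebra automorphism permuting the factors. The correct form is the one in your last sentence: $f\mapsto\sigma_f$ is a homomorphism on the subgroup of $(\End(E)/\p)^\ast$ generated by $-1$ and the class of Frobenius. Corollary~\ref{isomorphisme}(1) needs the same restriction.

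Finally, Step 1's appeal to Lemma~\ref{stabi} needs $\p$ to be two-sided. When $\End(E)$ is a maximal quaternion order and $\ell\neq p$, there is no two-sided ideal of reduced norm $\ell$, and $\End(E)/\ell\End(E)\simeq M_2(\F_\ell)$ stabilises no line of $E[\ell]$. So in the supersingular case, $f(H)=H$ is an additional assumption.
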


\begin{proof}
We assume the conditions of Proposition \ref{thm:main}.

Now, we choose a nonzero point $Q \in H \subseteq E[\ell]$ of prime order
$\ell$, with $\ell \neq \mathrm{char}(k)$, which exists due to the cyclicity of $H\subset E[\ell]$.

Then, we fix an endomorphism $f \in \End(E)$, and since $f$ is nonzero, the induced map $\sigma_f$ sends $x(Q)\mapsto x(f(Q))$, and by Proposition~\ref{1}, it defines a field homomorphism of $K = k(x(P))$.
Proposition~\ref{2} ensures that $\sigma_f$ is nontrivial and bijective. Hence, $\sigma_f \in \Gal(K/k)$, which proves Proposition~\ref{thm:main}.
\end{proof}


\section{Construction in the general case}\label{General case}

\subsection{Extension to composite degrees}\label{sec:composite}

The statement of Proposition~\ref{thm:main} does not rely on the primality of the integer $\ell$, $\psi$ denotes a irreducible factor of $\psi_H$.
Indeed, its proof only uses the following ingredients:
\begin{itemize}
\item the existence of a subgroup scheme $H=E[\p]\subset E[\ell]$;
\item the fact that the roots $(x_i)_{1\le i\le n}$ of $\psi$ form a basis of the $k$-vector space $K$;
\item the existence of an endomorphism $f\in\End(E)$ such that $f(H)=H$.
\end{itemize}
Lemma~\ref{lem:stability} and Proposition~\ref{1} ensure that $f$ induces a $k$-morphism $\sigma_f$ on $K$, and
Proposition~\ref{2} implies that $(x_i)$ being a basis yields the explicit expression
$$
\sigma_f\Big(\sum_{i=1}^{n}\alpha_i x_i\Big)=\sum_{i=1}^{n}\alpha_i \bar{f}(x_i),\qquad \forall 1\leq i \leq n, \qquad \alpha_i\in k.
$$

\begin{prop}\label{cor:composite}
The conclusion of Proposition~\ref{thm:main} remains valid when
\begin{itemize}
\item the integer $\ell$ is a product of  distinct primes.
\item the integer $\ell$ is a power of a prime.
\end{itemize}
\end{prop}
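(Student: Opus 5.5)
The plan is to show that the proof of Proposition~\ref{thm:main} goes through unchanged, by checking the three ingredients listed just before the statement when $\ell$ is squarefree or a prime power. The only place primality of $\ell$ was used is the structural fact that $H=E[\p]$ is a finite étale subgroup scheme of $E[\ell]$, stable under $\End(E)$, on which a suitable $f$ acts bijectively. Everything afterwards is formal. Lemma~\ref{lem:stability} only uses that $f$ permutes the points of $H$, so $f^\ast$ permutes the primitive idempotents of $R_H\simeq\prod_{P\in H}k(P)$. Proposition~\ref{1} and Proposition~\ref{2} only use that $f^\ast$ restricts to a $k$-algebra endomorphism of $K$ and that the $x_i$ form a basis. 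I would therefore re-establish exactly these facts, with $\ell$ replaced by a composite integer coprime to $p$.

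\emph{Step 1: étaleness and stability.} Since $\gcd(\ell,p)=1$, $E[\ell]$ is finite étale of rank $\ell^2$, so every closed subgroup scheme $H\subseteq E[\ell]$ is étale. Hence $R_H\simeq\prod_{Z}k(Z)$ as in the Background section. The forward direction of Lemma~\ref{stabi} ($E[\p]$ is stable under $\End(E)$) never used primality, since it only uses that $\p$ is a left ideal, so $f(H)\subseteq H$.

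\emph{Step 2: $f$ acts bijectively on $H$.} This is where the hypothesis $\gcd(\deg f,\ell)=1$ enters. Since $\ker f$ has order dividing $\deg f$ and $H$ has exponent dividing $\ell$, we get $\ker f\cap H=0$. So $f|_H$ is injective on the finite group $H$, hence bijective. Then $f^\ast$ permutes idempotents and the proof of Lemma~\ref{lem:stability} applies verbatim.

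\emph{Step 3: the structure of $H$ in the two cases.} If $\ell=\ell_1\cdots\ell_r$ with distinct primes, I would use the CRT decompositions $E[\ell]=\bigoplus_j E[\ell_j]$ and $\p=\prod_j\p_j$ with $\p_j\mid(\ell_j)$. These give $H=\bigoplus_j E[\p_j]$, and each summand is handled by the prime case. If $\ell=\lambda^e$, I would argue that $E[\p]$ is a subgroup of $E[\lambda^e]\simeq(\Z/\lambda^e\Z)^2$ of order $\ell=N(\p)$, without needing cyclicity.

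The main obstacle, which I would address last, is that the converse in Lemma~\ref{stabi} and the identification $S=\{x(iP)\}$ used cyclicity of $H$. For a prime power, $E[\p]$ may fail to be cyclic, for instance when $\p=(\lambda)$. My first attempt would be to avoid these facts altogether. The statement fixes an irreducible factor $\psi$ of $\psi_H$ whose roots $x_i$ form a Galois orbit and a basis of $K$. Since $f$ is defined over $k$, $\bar f$ commutes with $\Gal(\bar k/k)$, so it maps a Galois orbit of $x$-coordinates to a Galois orbit in $x(H)$. Using that $f^\ast$ is a $k$-algebra map on $R_H$ compatible with $\pm1$, I would then argue that $\bar f$ maps the roots of $\psi$ into roots of $\psi$. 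The map $\sigma_f$ is then well defined on $K$ by linearity. It is multiplicative because it agrees with the restriction of $f^\ast$, and it is injective, hence an element of $\Gal(K/k)$.

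If this orbit-preservation fails for non-cyclic $H$, I would instead assume cyclicity of $E[\p]$. This holds, for example, when $\p$ is a product of distinct primes of norm $\ell_j$, or an invertible prime power $\mathfrak q^e$ with $\mathfrak q$ of norm $\lambda$.
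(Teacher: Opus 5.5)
\textbf{The gap is orbit preservation.} You postpone the claim that $\bar f$ sends the roots of $\psi$ to roots of $\psi$, and none of the facts you list implies it. In fact it is false in general, already when $H$ is cyclic, so falling back on cyclicity does not help.

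Since $f$ is defined over $k$, it commutes with $\mathrm{Frob}$. Hence $\bar f$ maps the Galois orbit $\{x_1,\dots,x_n\}$ onto the root set of \emph{some} irreducible factor $\psi'$ of $\psi_H$. Equivalently, $f^\ast$ permutes the factors $k(Z)$ of $R_H$, but it need not fix the factor belonging to $\psi$. If $\psi'\neq\psi$, then $\sigma_f(x_1)=\bar f(x_1)$ is not a root of $\psi$, the minimal polynomial of $x_1$, so $\sigma_f$ is not a $k$-algebra map.

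Here is a counterexample satisfying all hypotheses of the statement:
\begin{itemize}
\item Let $E$ be ordinary, $\gcd(\ell,6p)=1$, and $P\in E(k)$ of order $\ell$ with $x(P)\neq0$. Assume $q\not\equiv1$ modulo every prime factor of $\ell$ (e.g.\ $q=7$, $\ell=5$, $\#E(\F_7)=10$).
\item Then $H=\langle P\rangle=E[\p]$ with $\p=(\ell,\mathrm{Frob}-1)$ of norm $\ell$.
\item $\psi=X-x(P)$ is an irreducible factor of $\psi_H$, with $n=1$ and $K=k$.
\item Take $f=[2]$. Then $\sigma_f(1)=x(2P)/x(P)\neq1$, because $2P\neq\pm P$, so $\sigma_f\notin\Gal(K/k)$.
\end{itemize}

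When $\ell$ is prime, $f$ and $\mathrm{Frob}$ act on $H$ by scalars. Orbit preservation then holds exactly when the class of $f$ lies in the subgroup of $(\End(E)/\p)^\ast$ generated by $-1$ and the class of $\mathrm{Frob}$; for instance, it always holds when $\psi=\psi_H$.

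The paper's own route (induction on $\omega(\ell)$ via $H\simeq H_1\times H_2$ and a compositum, then the filtration $H[z^i]$) tacitly uses the same claim, when it asserts that the permutation of roots ``extends uniquely to an automorphism''. So this is a hypothesis missing from the statement, not something either route can establish.

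\textbf{How to repair it.} Add the hypothesis that $\bar f$ stabilizes the root set of $\psi$. The proof then becomes short and uniform in $\ell$. You no longer need cyclicity, the converse of Lemma~\ref{stabi}, the split into squarefree and prime-power $\ell$, or the idempotent formalism. Write $x_1=x(P)$ and $\bar f(x_1)=\pi_q^j(x_1)$ for some $j$. Since $f\circ\mathrm{Frob}=\mathrm{Frob}\circ f$, for every $i$ you get $\bar f(\pi_q^i(x_1))=x(f(\mathrm{Frob}^iP))=\pi_q^i(\bar f(x_1))=\pi_q^j(\pi_q^i(x_1))$. Thus $\sigma_f$ and $\pi_q^j$ are $k$-linear maps agreeing on the basis $(x_i)$, hence $\sigma_f=\pi_q^j\in\Gal(K/k)$.

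Your Step~2 (injectivity of $f$ on $H$ from $\gcd(\deg f,\ell)=1$) is correct, but it is not what is needed here.

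\textbf{A smaller slip.} The inclusion $f(E[\p])\subseteq E[\p]$ uses $\phi\circ f\in\p$ for $\phi\in\p$, i.e.\ closure under \emph{right} multiplication, not the left-ideal property. For a one-sided ideal of a quaternion order, $E[\p]$ is in general not $\End(E)$-stable. This is why the paper imposes two-sidedness in the supersingular case.
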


\begin{proof}
We proceed by induction on $\omega(\ell)$, the number of prime factors  of $\ell$.

\medskip\noindent\textbf{Base case.} If $\omega(\ell)=1$, i.e. $\ell$ is prime, the statement is exactly Proposition~\ref{thm:main}.

\medskip\noindent\textbf{Coprime decomposition.} Assume the claim holds for all integers with fewer than $\omega(\ell)$ prime factors. We make the case $\omega(\ell)=2$.

Let $\ell_1,\ell_2$ be two different primes such that $\ell=\ell_1\ell_2$ with $\gcd(\ell,p)=1$. Write $H\subseteq E[\ell]$ the subgroup (or subscheme) whose kernel polynomial is considered and denote by $\psi_{H}$ its kernel polynomial. Let $K$ be the splitting field of $\psi_H $ over $k$.

Over an algebraic closure, we have the canonical isomorphism of group schemes
$$
E[\ell]\simeq E[\ell_1] \times E[\ell_2],
$$
and therefore the subgroup $H$ decomposes (after possibly choosing suitable representatives) as a product $H\simeq H_1\times H_2$ with $H_i\subset E[\ell_i]$. Let $\psi_{H_i}$ be the kernel polynomial of $H_i$ and let $K_i$ be the splitting field of $\psi_{H_i}$; by construction the set of roots of $\psi_H $ is the Cartesian product of the sets of roots of $\psi_{H_1}$ and $\psi_{H_2}$, hence
$$
K = K_1K_2 \subset \overline k,
$$
the compositum of $K_1$ and $K_2$. Since  the degrees divide powers of $\ell_1$ and $\ell_2$ respectively, we have 
$$
\gcd\large([K_1:k],[K_2:k]\large)=1
$$
then the extensions $K_1/k$ and $K_2/k$ are linearly disjoint; consequently
$$
[K:k]=[K_1:k][K_2:k]
$$
and $K\simeq K_1\otimes_k K_2$ is a field (the tensor product is a domain by linear disjointness).

By the induction hypothesis, for any endomorphism $f\in\End_k(E)$ which preserves $H_1$ (resp.\ $H_2$) and acts bijectively on it, there is an induced automorphism $\sigma_{f}^{(1)}\in\Gal(K_1/k)$ (resp.\ $\sigma_{f}^{(2)}\in\Gal(K_2/k)$) given by the action on the corresponding roots. If $f$ preserves $H\simeq H_1\times H_2$ (and acts componentwise), then its action on the roots is diagonal: it sends a pair of roots $(x_{1},x_{2})$ to $(\sigma_{f}^{(1)}(x_{1}),\sigma_{f}^{(2)}(x_{2}))$. By the universal property of the compositum, these two actions glue to a unique automorphism $\sigma_f\in\Gal(K/k)$ whose restriction to $K_i$ is $\sigma_{f}^{(i)}$. Thus, the statement holds for $\ell=\ell_1\ell_2$.

By direct induction, we have the result for a finite product of distinct primes.

\medskip\noindent\textbf{Prime powers.}
It remains to handle the case $\ell = z^r$ for a prime $z\neq \Char(k)$ and an integer $r \ge 1$.
Consider the natural filtration of the $\ell$-primary part:
$$
{0}=H[1]\subset H[z]\subset H[z^2]\subset\cdots\subset H[z^r]=H,
$$
where $H[z^i]$ denotes the subgroup of points of order dividing $z^i$.
Let $\psi_i$ be the kernel polynomial of $H[z^i]$ and $K_i$ its splitting field over $k$.
Then we obtain a tower of finite extensions
$$
k = K_0 \subset K_1 \subset \cdots \subset K_r = K,
$$
with $[K_i : K_{i-1}]$ dividing the degree of the kernel polynomial $\psi_i$
at each step.

We proceed by induction on $i$. For $i=1$, the claim follows from Proposition~\ref{thm:main} applied to $H[z]$.

Assume that $\sigma_f$ has been defined on $K_{i-1}$.
Since $f(H)=H$ and $f$ is a rational map over $k$, we have $f(H[z^i]) = H[z^i]$, hence $f$ permutes the abscissæ of $H[z^i]$, i.e the roots of $\psi_i$.
As $\psi_i$ is separable when $\ell \neq \Char(k)$, this permutation extends uniquely to an automorphism of the splitting field $K_i$ fixing $K_{i-1}$.
By construction, this automorphism restricts to the previously defined $\sigma_f$ on $K_{i-1}$.
Induction on $i$ yields an automorphism $\sigma_f \in \mathrm{Gal}(K/k)$ satisfying
$$
\sigma_f(x(P)) = x(f(P)), \quad \forall P \in H.
$$

Combining this prime power case with the coprime decomposition of $\ell$,
the claim follows for all integers $\ell \ge 1$.
\end{proof}

\subsection{Main result}

To conclude this section, we can establish Theorem~\ref{thm::final} :

\begin{proof}[Proof of Theorem~1.1]
To establish this result, we just need to combine Proposition~\ref{thm:main} and Proposition~\ref{cor:composite}.
\end{proof}
We make the following remarks about this result. First, we can see the geometric aspect of our construction :

\begin{rem}
We have constructed $\sigma_f$, which is a geometric realization of the Galois action induced by $f$ on $E[\ell]$.
\end{rem}

Furthermore, we can relate our construction to the Galois representation  theory. Recall, we have the representation
$\rho_{E,\ell}\colon\Gal(\overline{k}/k)\to\Aut(E[\ell])\simeq\GL_2(\mathbb{F}_\ell)$. It describes the action of $\Gal(\overline{k}/k)$ on $E[\ell]$.
\begin{rem}
The induced automorphism
$$
\sigma_f\colon x(P)\in K=k[x(P)\mid P\in H]\mapsto x(f(P))\in K
$$
gives an explicit realization of this action, at level of coordinates field $K$ of points of $H$.

\end{rem}

Finally, we can extend the previous construction to an infinite family of finite field extensions.

\begin{rem}
Consider a family of subgroups $(H_i = E[\p_i])_{i\ge 1}$, where $\p_i$ are prime ideals of degree $1$ in $\End(E)$, and such that the corresponding kernel polynomials $\psi_{H_i}$ are irreducible. Setting $K_i=k[X]/(\psi_{H_i}(X))$, we obtain a family of extensions $(K_i/k)_{i\ge 1}$ whose degrees $[K_i:k]$ are unbounded.
\end{rem}

\subsection{Constructing the Galois group}\label{xxx}
Our construction associates to any endomorphism $f \in \End(E)$ preserving $H$ an automorphism $\sigma_f \in \Gal(K/k)$. When
$H = E[\p]$ for an ideal $\p \subset \End(E)$, the action of $\End(E)$ on $H$ factors through $\End(E)/\p$, and the group of
units $(\End(E)/\p)^*$ embeds naturally into $\Aut(H)$.

Since $\sigma_f$ depends only on the restriction of $f$ to $H$, the construction factors through $(\End(E)/\p)^*$. This yields a canonical group homomorphism
$$
\sigma\colon\fonction{(\End(E)/\p)^*}{\Gal(K/k)}{f}{\sigma_f}.
$$

\begin{rem}
By construction, we have $\{\pm1\}\subseteq\Ker(\sigma)$. Indeed, for all $u \in (\End(E)/\p)^*$ and all points $Q\in H$, we have $x(uQ)=x(Q)$ if and only if $uQ=\pm Q$, so $\Ker(\sigma)\simeq\{\pm1\}$.
\end{rem}

We recall the hypotheses of Theorem~\ref{thm::final}, let $k=\F_q$ be a finite field of characteristic $p$, and let $\ell$ be a positive integer coprime to $p$. Let $E/k$ be an elliptic curve and let $\p\subset \End(E)$ be an ideal of norm $\ell$. 

Set $H=E[\p]=\bigcap_{\phi\in\p}\Ker(\phi)\subseteq E[\ell].$ Let $\psi$ be an irreducible factor of the kernel polynomial $\psi_H$,  and let $K$ be the splitting field of $\psi$ over $k$. 
We assume that the roots $(x_i)_{1\le i\le n}$ form a $k$-basis of $K$ and denotes by $\mathrm{Frob}:(x,y)\mapsto(x^q,y^q)$ the geometric Frobenius.

\begin{cor}\label{isomorphisme}
Under the conditions of Theorem \ref{thm::final}, 
\begin{enumerate}
    \item\label{surjective} The group homomorphism
    $$
    \sigma\colon\fonction{(\End(E)/\p)^*}{\Gal(K/k)}{f}{\sigma_f}.
    $$
    is a surjective.
    \item If $\deg(\psi)=\frac{\varphi(\ell)}{2}$, then, 
    $$
    (\Z/\ell\Z)^*/\{\pm1\}\simeq\Gal(K/k)\simeq(\End(E)/\p)^*/\{\pm1\}. 
    $$
    where $\varphi$  is Euler’s totient function.
\end{enumerate}

\end{cor}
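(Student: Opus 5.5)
Surjectivity will come from producing the Frobenius as an element of $(\End(E)/\p)^*$. The geometric Frobenius $\mathrm{Frob}\colon(x,y)\mapsto(x^q,y^q)$ is an endomorphism of $E$ defined over $k$. Its degree is $q$, which is coprime to $\ell$ because $\gcd(\ell,p)=1$. Since $\mathrm{Frob}$ commutes with every endomorphism defined over $k$, Lemma~\ref{stabi} gives $\mathrm{Frob}(H)=H$. Moreover $\mathrm{Frob}$ acts bijectively on $H$: its degree is prime to $\ell$, so its kernel meets $H$ trivially. Hence the class of $\mathrm{Frob}$ in $\End(E)/\p$ is a unit.

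We then compute $\sigma_{\mathrm{Frob}}$ on the basis $(x_i)$. Taking $x$-coordinates, $\overline{\mathrm{Frob}}(x(P))=x(P)^q=\pi_q(x(P))$. So $\sigma_{\mathrm{Frob}}$ agrees with $\pi_q$ on each $x_i$. Both maps are $k$-linear: $\sigma_{\mathrm{Frob}}$ by its definition in Theorem~\ref{thm::final}, and $\pi_q$ because it fixes $k$. As the $x_i$ form a $k$-basis of $K$, we get $\sigma_{\mathrm{Frob}}=\pi_q$. Since $\pi_q$ generates the cyclic group $\Gal(K/k)$ and $\sigma$ is a homomorphism, the image of $\sigma$ contains $\langle\pi_q\rangle=\Gal(K/k)$. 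This proves (1).

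There is one point to check here. The roots of $\psi$ are only the $x$-coordinates of one Galois orbit in $H$. We need $\sigma_f$ to preserve that orbit, so that the formula for $\sigma_f$ lands in $K$. For $\mathrm{Frob}$ this is automatic, because its action on that orbit is exactly the Galois action.

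For (2), we first identify $(\End(E)/\p)^*$ with $(\Z/\ell\Z)^*$. The ideal $\p$ has norm $\ell$, so $\End(E)/\p$ has $\ell$ elements. Since $H$ is cyclic of order $\ell$ and the action on it is faithful, $\End(E)/\p$ embeds into $\End(H)\simeq\Z/\ell\Z$. A ring of order $\ell$ embedding into $\Z/\ell\Z$ must be all of it, so $\End(E)/\p\simeq\Z/\ell\Z$, and taking units gives $(\End(E)/\p)^*\simeq(\Z/\ell\Z)^*$.

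Next we use the remark preceding the corollary: $\Ker(\sigma)=\{\pm1\}$, because $x(uQ)=x(Q)$ forces $uQ=\pm Q$. To apply it we need $u$ to act on a generator $Q$ of $H$. The hypothesis $\deg\psi=\varphi(\ell)/2$ says that the roots of $\psi$ are exactly the $x$-coordinates of all points of exact order $\ell$ in $H$, taken up to sign. In particular the orbit contains the $x$-coordinate of a generator $Q$, so the remark applies.

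Combining the kernel computation with the surjectivity from (1), $\sigma$ induces $(\End(E)/\p)^*/\{\pm1\}\simeq\Gal(K/k)$. As a consistency check, both sides have order $\varphi(\ell)/2=n=[K:k]$. Composing with the identification $(\End(E)/\p)^*\simeq(\Z/\ell\Z)^*$ gives the isomorphism with $(\Z/\ell\Z)^*/\{\pm1\}$.

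The main obstacle is the well-definedness of $\sigma$ on all of $(\End(E)/\p)^*$. An arbitrary unit class may have no representative defined over $k$. It may also fail to commute with Frobenius, for instance in the supersingular case, and then the formula for $\sigma_f$ need not give a field automorphism. I would handle this by restricting to classes represented by endomorphisms defined over $k$; over $k$ the endomorphism ring is commutative, containing $\Z[\mathrm{Frob}]$. Surjectivity only needs $\mathrm{Frob}$ itself. For the isomorphism in (2), I would compare cardinalities: the image of $\Z[\mathrm{Frob}]$ in $(\End(E)/\p)^*$ already maps onto $\Gal(K/k)$, which has order $\varphi(\ell)/2$.
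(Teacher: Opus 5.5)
Your main argument is the paper's proof. For (1) you use that the class of $\mathrm{Frob}$ maps to $\pi_q$, which generates $\Gal(K/k)$. For (2) you use the first isomorphism theorem with $\Ker(\sigma)=\{\pm1\}$ and $(\End(E)/\p)^*\simeq(\Z/\ell\Z)^*$. Checking $\sigma_{\mathrm{Frob}}=\pi_q$ on the basis, and checking that the orbit contains a generator of $H$, is extra care the paper omits.

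Your final paragraph, however, puts the obstacle in the wrong place, so the fix it proposes repairs nothing.
\begin{itemize}
\item You have just shown $\End(E)/\p\simeq\Z/\ell\Z$. So every class contains an integer multiplication $[c]$, which is defined over $k$ and commutes with $\mathrm{Frob}$. Restricting to $k$-rational classes therefore removes nothing.
\item The real obstruction is the one in your third paragraph, which you checked only for $\mathrm{Frob}$. Every element of $\Gal(K/k)$ permutes the roots of $\psi$, so $\sigma_f$ can be a field automorphism only if $\bar f$ maps the Galois orbit underlying $\psi$ to itself. The issue is not merely landing in $K$.
\item $\mathrm{Frob}$ acts on the cyclic group $H$ by a scalar $a$, and the orbit of a generator $Q$ is $\{\pm a^iQ\}$. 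So this condition says the class of $f$ lies in $\langle -1,\mathrm{Frob}\rangle$.
\end{itemize}

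This condition really can fail when $\psi\neq\psi_H$. Suppose $H=E[\p]$ is pointwise $k$-rational with $\ell\ge5$; for example $\p=(\ell,\mathrm{Frob}-1)$ for an ordinary curve whose Frobenius eigenvalues modulo $\ell$ are $1\neq q$. Then $K=k$ and $\psi=X-x(Q)$ with $x(Q)\neq0$. But $\sigma_{[2]}(x(Q))=x(2Q)\neq x(Q)$, so $\sigma_{[2]}\notin\Gal(K/k)=\{\mathrm{id}\}$, even though $[2]$ is defined over $k$ and has degree prime to $\ell$.

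Consequently, statement (1) holds only on the stabilizer of the orbit. The same is true of the paper's one-line proof, which presupposes that $\sigma$ is defined on all units. On that stabilizer $\sigma_{\pm\mathrm{Frob}^i}=\pi_q^i$, and surjectivity is immediate.

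Under the hypothesis of (2), $\deg\psi=\varphi(\ell)/2$ equals the order of $a$ modulo $\pm1$. Hence $|\langle -1,\mathrm{Frob}\rangle|=\varphi(\ell)$, and the stabilizer is all of $(\End(E)/\p)^*$. This is the cardinality argument you sketched, and it is exactly what makes (2) true as stated.
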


\begin{proof}
\begin{enumerate}
    \item The Frobenius automorphism $\pi_q$ generates the Galois group $\Gal(K/k)$. The image under $\sigma$ of the geometric Frobenius $\mathrm{Frob}$ is the Frobenius automorphism $\pi_q$, which generates $\Gal(K/k)$. Hence $\sigma$ is surjective.
    \item On one hand, we have $\Gal(K/k)\simeq(\End(E)/\p)^*/\Ker(\sigma)$. On the other, as $H$ is cyclic of order $\ell$, we have  $(\End(E)/\p)^*\simeq(\Z/\ell\Z)^*$ and by the hypothesis $|\Gal(K/k)|=\deg(\psi)$. We conclude 
    $$
    (\Z/\ell\Z)^*/\{\pm1\}\simeq\Gal(K/k)\simeq(\End(E)/\p)^*/\{\pm1\}.
    $$
\end{enumerate}
\end{proof}

\begin{rem}
The condition $\deg(\psi)=\frac{\varphi(\ell)}{2}$ is equivalent to the equality of polynomials $\psi_H=\psi$. Indeed, we have $E[\ell]\simeq(\Z/\ell\Z)^2$, $H=E[\p]$ where $\p$ is an ideal of $\End(E)$ of norm $\ell$. As, for all points $P\in H\backslash\{0\}$, $x(P)=x(-P)$, then $\deg(\psi_H)=\frac{\varphi(\ell)}{2}$. By equality of degree, we can conclude $\psi=\psi_H$.
\end{rem}

\begin{rem}
For all $1\leq i\leq n$, we have $\sigma_f(x_i)\in\{x_j\mid1\leq j \leq n\}$, so $\sigma_f$ induces a permutation of the roots of $\psi_H$. Also, we have group isomorphisms
$$
\Gal(K/k)\simeq(\End(E)/\p)^*/\{\pm1\}\simeq(\Z/\ell\Z)^*/\{\pm1\}.
$$
Identifying each index $i$ with the corresponding element of $(\Z/\ell\Z)^*/\{\pm1\}$ via these isomorphisms, $\sigma_f$ acts as multiplication by the class of $f$, i.e.\ as the permutation $i \mapsto f\cdot i$ in $(\Z/\ell\Z)^*/\{\pm1\}$.
\end{rem}

The condition $\psi=\psi_H$ ensures that the kernel polynomial is irreducible. By the Chebotarev density theorem applied to the function field of the curve, the Frobenius element $\pi_q$ is distributed as a random element within the Galois group $\Gal(K/k)\subset(\Z/\ell\Z)^*/\{\pm1\}$. 

\begin{rem}
Heuristically, this means that for a random choice of the curve and the prime $\ell$, the probability that $\psi_H$ is irreducible is $\varphi(n)/n$.
\end{rem}

The polynomial $\psi_H$ is irreducible if and only if its roots form a single 
orbit under the action of $\pi_q$.

Consequently, $\psi=\psi_H$ holds if and only if the Frobenius $\pi_q$ is a primitive element of the group 
$(\Z/\ell\Z)^*/\{\pm1\}$. This happens with probability $\frac{\varphi(n)}{n}$ where $n=\frac{\varphi(\ell)}{2}$.


\section{Algorithmic considerations}\label{sec:algo}

The purpose of this section is not to introduce an optimized algorithmic variant of the construction, but to show that the geometric construction of Section \ref{sec:construction} admits a direct and explicit computational realization.

\subsection{Constructing an endomorphism of elliptic curves from isogeny cycles}
We want to determine an explicit endomorphism $f \in \End(E)$ of degree $d$ coprime to $\ell$.

\medskip

Our goal is to construct an endomorphism $f\in\End(E)$ that can be evaluated efficiently. To do this, we choose to construct it in the form of a cycle of isogenies of small degrees  such that there exist elliptic curves $(E_i)_{1\leq i\leq n}$ and isogenies $(\phi_i)_{1\leq i\leq n}$ such that $\phi_i \colon E_i \to E_{i+1}$, defining a path in an isogeny graph satisfying $E_1 = E_n = E$, and for the endomorphism  $f = \phi_n \circ \cdots \circ \phi_1 \colon E \to E$.  

By construction, a closed (oriented) isogeny cycle defines an endomorphism $ f \in \mathrm{End}(E) $ which preserves the chosen cyclic subgroup $ H \subset E[\ell] $.

\subsubsection{Ordinary case}
Such a cycle can be constructed by using the structure of ordinary isogeny graphs; see Chapter~III in \cite{Kohel1996} for further details. Another good reference for calculating endomorphism rings in the ordinary case can be found in \cite{bissonsutherland}.
\medskip

Let $\OO$ be an order in a quadratic field, let $\pi \in \OO$ be an element of norm $q=p^r$, and let $\mathcal{E}ll(\OO,\pi)=\{E_i\}_{1\leq i\leq h}$ be a set of representatives of elliptic curves over $k$ equipped with isomorphisms $\eta_i\colon\OO\to\End(E_i)$ sending $\pi$ to the Frobenius endomorphism $\pi_i\in\End(E_i)$. Let $\ell$ be a rational prime and let $\h\subset\OO$ be a prime ideal above $\ell$. Via the isomorphism $\eta_i$, the ideal $\h$ defines a finite cyclic subgroup scheme $G_i = E_i[\h] \subset E_i[\ell]$.

Let $S=\{\h_1,\dots,\h_t\}$ be a set of prime ideals of $\OO$ such that $\h\notin S$, and let $\psi_i$ be an irreducible factor of the kernel polynomial of $G_i$. We define an isogeny graph $\Gamma_S(\OO,\pi)$ whose vertices are pairs $(E_i,\psi_i)$ with $E_i\in\mathcal{E}ll(\OO,\pi)$ and $\psi_i$ an irreducible factor of the kernel polynomial of $G_i=E_i[\h]$, and whose edges correspond to isogenies $E_i\to E_j$ with kernel $E_i[\h_k]$ for some $\h_k\in S$.

We view the set of roots $V(\psi_i)=\Spec(\F_q[X]/(\psi_i))$ as a finite étale subscheme of the Kummer quotient $G_i\setminus\{0\}/\{\pm1\}$, whose closed points correspond to Galois orbits of the $x$-coordinates of non-zero points of $G_i$.

\subsubsection{Supersingular case}
In the case of supersingular elliptic curves, see \cite{colo} for more details on the theory of oriented isogeny graphs.

In the supersingular case, the isogeny graph admits an orientation. Its vertices are oriented pairs $(E_i,Z_i)$, where $Z_i\subset E_i[\ell]$ is a cyclic subgroup, and its edges are isogenies $(E_i,Z_i)\to(E_j,Z_j)$ preserving the orientation.

A closed path in this oriented graph determines a finite subgroup $Z\subset E[\ell]$ which is preserved by the induced endomorphism $f\in\End(E)$. The associated coordinate subalgebra $K = k[Z]\subset \OO_{E[\ell]}$ is therefore stable under the action of $f$, and the induced action defines an automorphism $\sigma_f\in\Gal(K/k)$.

\subsection{Algorithm}
\begin{alg}\label{alg:sigmaf}

\textbf{Input :}
\begin{itemize}
\item A finite field $k=\mathbb{F}_q$ , and an elliptic curve $E/k$.
\item An integer $\ell$ such that $\Char(k)$ does not divides $\ell$, $\ell\neq2$, $H=E[\p]\subset E[\ell]$ and  assume its division polynomial $\psi_H $ is irreducible with  $\deg(\psi_H)=n$.
\item An endomorphism $f\in\End_k(E)$ with $\deg(f)$ coprime to $\ell$.
\end{itemize}

\textbf{Output :}
The  automorphism $\sigma_f\in\Gal(K/k)$ defined by

\begin{align*}
\sigma_f\left( \sum_{i=1}^{n} \alpha_i x_i \right)&= \sum_{i=1}^{n} \alpha_i \bar{f}(x_i).
\end{align*}

\textbf{Procedure.}

\begin{enumerate}
\item Let $K=k[x]/(\psi_H)$ and let $\mathcal B=(x_i)_{1\le i\le n}$ be  the roots of $\psi_H$ which form a basis of $K/k$.
\item For each basis element $x_i$ compute its image $\bar{f}(x_i)$ under $f$. By abuse of notation, $f(x(P))$ denotes the x-coordinate of the point $f(P)$ where $P$ is a point with x-coordinate $x_i$.
\item Extend $k$-linearly: for any $z=\sum_{i=1}^{n}\alpha_i x_i\in K$, define
$$
\sigma_f(z)=\sum_{i=1}^n\alpha_i\bar{f}(x_i).
$$
This map $\sigma_f:K\to K$ is a $k$-automorphism; if needed, represent it by the $n\times n$ matrix whose $i$-th column contains the coordinates of $f(x_i)$ in the basis $\mathcal B$.
\end{enumerate}
\end{alg}

\subsection{Complexity Analysis}

Throughout this section, complexity estimates are given in terms of bit operations ($\tilde{O}$ hides polylogarithmic 
factors). Now, we consider the endomorphism $f$ like an input. 

\subsubsection{Computing automorphisms using Frobenius}

The classical way to compute an automorphism is as follows.

The automorphism group $\Gal(K/k)$ is generated by the arithmetic Frobenius $\pi_q$ whose action on $K=k[x]/(\psi_H(x))$, 
with $[K:k]=n$, is obtained by reducing $x^q$ modulo $\psi_H(x)$. Using fast exponentiation, evaluating $\pi_q$ requires
$$
\tilde{O}(n(\log q)^2),
$$
because representing an element of $K$ costs $O(n\log q)$;  multiplication in $K$ costs $\tilde{O}(n\log q)$ and  fast
exponentiation costs $\tilde{O}(n(\log q)^2)$.

\begin{rem}
Multiplication in certain finite fields can be optimized by minimizing the bilinear complexity \cite{BalletChaumine2004}.
\end{rem}

The automorphism $\pi_q$ thus obtained is the distinguished generator of the cyclic group $\Gal(K/k)$.
Other elements of $\Gal(K/k)$ can be defined as powers of Frobenius, $\pi_q^i$, $0 \le i < n$, whose evaluation costs
$$
\tilde{O}(i n (\log q)^2)
$$
additional modular compositions in $K$.

\subsubsection{Computing automorphisms using endomorphisms}

\paragraph{Evaluation of isogeny}
Let $C_{\mathrm{eval}}(d)$ denote the complexity of the evaluation of an isogeny $f$ of degree $d$.

For a separable $d$-isogeny, Vélu’s formulas gives $C_{\mathrm{eval}}(d) = \tilde{O}(d^2\log q)$.

For scalar multiplication $[d]$, the decomposition $[d] = \widehat{\varphi} \varphi$ from \cite{Kohel2005} reduces the cost to $\tilde{O}(d\log q)$.

For endomorphisms given as compositions of prime degree $p_i$, with \cite{Bernstein2020} we have
$$
C_{\mathrm{eval}}(p_i) = \tilde{O}(\sqrt p_i),
$$
When $d = \prod_i p_i^{e_i}$ is a composite integer, we need to decompose the isogeny $f$; after that, we have a cost
$$
\tilde{O}(\sum_i e_i \sqrt{p_i} \log q).
$$

This applies only to the initial evaluation of $f(x_i)$ for $i=1,\dots,n$ during precomputation.

\paragraph{Analysis of Algorithm}

We now analyze Algorithm~\ref{alg:sigmaf}. Let $n=\deg(\psi_H)$, constructing the field $K$ and a $k$-basis costs $O(n \log q)$
; computing the images $f(x_i)$ for the $n$ basis elements costs $\tilde{O}(n  C_{\mathrm{eval}}(d))$ ; expressing these
images in the chosen basis and building the transition matrix costs $\tilde{O}(n^3 \log q)$.

The third step corresponds to a one-time precomputation: once the matrix representing the action of $f$ on the basis is known,
the automorphism $\sigma_f$ can be evaluated efficiently for all subsequent elements of $K$. This precomputation cost
$O(n  C_{\mathrm{eval}}(d)) + \tilde{O}(n^3 \log q)$.

\begin{rem}
If the roots of $\psi_H$ form a normal basis of $K/k$, the Frobenius action is cyclic and stable under Galois conjugation.
In this case, expressing $\bar{f}(x_i)$ in the basis reduces to linear-time operations, and the precomputation cost drops to
$\tilde{O}(n\log(\ell)^2)$.
\end{rem}

To conclude, evaluation of $\sigma_f$ on a single element of $K$ cost $\tilde{O}(n^2)$.

\subsubsection{Comparison}

The main interest of the endomorphism-based construction is not asymptotic speed, but the ability to generate explicit Galois automorphisms without revealing their discrete logarithm with respect to Frobenius.

The Frobenius-based method is optimal when we want to compute all automorphisms.

In contrast, the endomorphism–isogeny approach produces many distinct automorphisms directly from elements of $\End(E)$, without using Frobenius.

From a complexity perspective, the method becomes interesting if we denote the degree of the isogeny $f$ by  $d = \prod_i p_i^{e_i}$, when $\log q \gg n  \sum_i e_i \sqrt{p_i}.$


\section{Examples}\label{sec:applications}

We recall that if $E$ is  ordinary (resp. supersingular), then $\End(E)$ is abelian (resp. non-abelian). We want to show that the type of the elliptic curves is not important for the construction.

\subsection[Ordinary case]{Ordinary case\footnote{\textsc{Magma} implementations are available at \url{https://gaati.org/djambae/pdf/Recherche/ordinaire.m}}}
Consider the finite field $k=\F_7$, define  an elliptic curve $E/k : y^2=x^3+2x+4$. Let $\ell=11$. The ring $\End(E)$ is an order in a quadratic field and we have $E[11]\simeq(\Z/11\Z)^2$. We can choose
$$
\psi_H(X) = X^5+2X^4+5X^3+3X^2+2X+2
$$
for the division polynomial of $H\simeq\Z/11\Z$. This polynomial has degree 5 and is irreducible on $\F_7[X]$. We thus have
$$
K=k[X]/(\psi_H(X))\simeq\F_{7^5}.
$$
Denote by $(x_i)_{1\leq i\leq5}$ the roots of $\psi_H$, take a nonzero point  $P\in H\backslash\{0\}$, take a root of $\psi_H$ denoted by $x_1=x(P)$ and take an endomorphism $f=[3]\in\End(E)$, since $f$ acts like a permutation of points of $H$, so we have
$$
\bar{f}=(x_1x_3x_2x_5x_4),
$$
which is a 5-cycle (of order 5). To conclude, we have the explicit expression
$$
\forall x=\sum_{i=1}^5 \alpha_i x_i\in K, \quad \sigma_f(x)=\alpha_4 x_1+\alpha_3 x_2+\alpha_1 x_3+\alpha_5 x_4+\alpha_2 x_5.
$$

\begin{rem}
In this case, we have $\sigma_f=\pi_7^2.$
\end{rem}

\subsection[Supersingular case]{Supersingular case\footnote{\textsc{Magma} implementations are available at \url{https://gaati.org/djambae/pdf/Recherche/supersingulier.m}}}

Let $k=\F_5$ be a finite field and let $E/k : y^2+y=x^3$ be a supersingular elliptic curve. Let $\ell=7$. Then $\End(E)$ is an order in a quaternion algebra.

A choice for the division polynomial of $H\simeq \Z/7\Z$ is
\[
\psi_H(X)=X^3+X^2+X+3.
\]
In this case, the order of the Galois group $\Gal(K/k)$ is $3$ since $\psi_H$ is irreducible. With this data, we have
\[
K=k[X]/(\psi_H(X))\simeq \F_{5^3}.
\]
Denote by $(x_i)_{1\leq i\leq 3}$ the roots of $\psi_H$, take a nonzero point $P\in H\setminus\{0\}$, take a root of $\psi_H$ denoted by $x_1=x(P)$, and take the endomorphism $f=[2]\in\End(E)$. We can describe the action of $f$ on $H$ as the permutation
\[
\bar{f}=(x_1\;x_2\;x_3).
\]

To conclude, as in the ordinary case, we can determine an explicit expression of the induced automorphism:
\[
\forall x=\sum_{i=1}^3 a_i x_i\in K,\quad 
\sigma_f(x)=a_1 x_2+a_2 x_3+a_3 x_1.
\]

\begin{rem}
In this case, we have $\sigma_f=\pi_5^2.$
\end{rem}


\printbibliography

\end{document}